\pgfplotsset{compat=1.18}
\let\originalleft\left
\let\originalright\right
\renewcommand{\left}{\mathopen{}\mathclose\bgroup\originalleft}
\renewcommand{\right}{\aftergroup\egroup\originalright}
\newtheorem{theorem}{Theorem}
\newtheorem{lemma}{Lemma}
\newtheorem{definition}{Definition}
\newtheorem{example}{Example}[section]
\newcommand\funding[1]{\protect\\ \hspace*{15.37pt}{\bfseries Funding:} #1}
\newcommand{\email}[1]{\protect\href{mailto:#1}{#1}}
\title{Bifurcation analysis of multiple limit cycles created in boundary equilibrium bifurcations in hybrid systems\thanks{Manuscript draft
\funding{Hong Tang was funded by the China Scholarship Council; David Simpson was supported by Marsden Fund contract MAU2209 managed by Royal Society Te Ap\={a}rangi.}}}
\author{
Hong~Tang \thanks{Department of Engineering Mathematics, University of Bristol, Bristol BS8 1TR, UK
  (\email{hong.tang@bristol.ac.uk}).}
\and 
Alan~Champneys \thanks{Department of Engineering Mathematics, University of Bristol, Bristol BS8 1TR, UK   (\email{A.R.Champneys@bristol.ac.uk}).}
\and
David J.W.~Simpson
\thanks{School of Mathematical and Computational Sciences,
Massey University,
Palmerston North, 4410,
New Zealand (\email{D.J.W.Simpson@massey.ac.nz}).}
}
\newcommand*{\addFileDependency}[1]{
  \typeout{(#1)}
  \@addtofilelist{#1}
  \IfFileExists{#1}{}{\typeout{No file #1.}}
}
\begin{document}
    
    \newcommand\cO{\mathcal{O}}
    \newcommand{\rD}{{\rm D}}
    \newcommand{\ee}{\varepsilon}
    \newcommand{\transpose}{{\sf T}}
    \newenvironment{Example}{%
	\begin{example}%
	}{%
	\end{example}%
	\hfill $\blacksquare$ \\
}
\newenvironment{keywords}{ \\
	\noindent\textbf{Keywords:}\quad
}{\par}
    \maketitle

    \begin{abstract}
        A boundary equilibrium bifurcation (BEB) in a hybrid dynamical system occurs when a regular equilibrium collides with a switching surface in phase space. This causes a transition to a pseudo-equilibrium embedded within the switching surface, but limit cycles (LCs) and other invariant sets can also be created and the nature of these is not well understood for systems with more than two dimensions. This work treats two codimension-two scenarios in hybrid systems of any number of dimensions, where the number of small-amplitude limit cycles bifurcating from a BEB changes. The first scenario involves a limit cycle (LC) with a Floquet multiplier $1$ and for nearby parameter values the BEB creates a pair of limit cycles. The second scenario involves a limit cycle with a Floquet multiplier $-1$ and for nearby parameter values the BEB creates a period-doubled solution. Both scenarios are unfolded in a general setting, showing that typical two-parameter bifurcation diagrams have a curve of saddle-node or period-doubling bifurcations emanating transversally from a curve of BEBs at the codimension-two point. The results are illustrated with three-dimensional examples and an eight-dimensional airfoil model. Detailed computational results show excellent agreement to the unfolding theory and reveal further interesting dynamical features that remain to be explored.
 \\
\begin{keywords}
  Impact, periodic orbits, saddle-node, period doubling, boundary equilibrium bifurcation,  hybrid system
\end{keywords} 
    \end{abstract}

    \section{Introduction}
    \label{sec:intro}

    To understand the behaviour of a dynamical system, it is helpful to identify {\em bifurcations}
    where the dynamics changes in a fundamental way. There is a well established and extensive theory for bifurcations that has proved to be highly successful at explaining the dynamical behaviour of diverse systems in science, engineering, and medicine (and beyond), in part because much of the theory places no restriction on the system dimension, i.e.~the number of variables. This occurs when the pertinent dynamics belongs to a centre manifold that is one or two dimensional for most common bifurcations \cite{Yuri98}.

But this dimension reduction requires that the equations of motion are smooth. 
For piecewise-smooth ODEs (ordinary differential equations) and hybrid systems,
many basic bifurcations do not involve a centre manifold. The phase spaces of such systems have codimension-one switching surfaces where, in the case of piecewise-smooth systems, the functional form of the ODEs changes \cite{Bernardo2007,Fili88,Je18b}. Hybrid systems have reset laws (maps) that specify an instantaneous change to the state of system whenever it reaches a switching surface \cite{Goe12,VaSc00}. These systems are used to model a wide variety of mechanical and control systems, particularly vibro-impacting systems with hard unilateral constraints and systems that combine digital and analog elements \cite{Bro16,Iv00,WiDe00}.

The simplest type of bifurcation unique to piecewise-smooth and hybrid systems is a {\em boundary equilibrium bifurcation} (BEB) whereby a regular equilibrium (zero of some smooth component of the ODEs) collides with a switching surface. As shown by di Bernardo {\em et al.}~\cite{DiBernardo2008}, if we look only at equilibria, there are two cases for the local dynamics occurring as we cross the BEB. Either the regular equilibrium is replaced by another equilibrium ({\em persistence}), or it collides and annihilates with a coexisting equilibrium ({\em nonsmooth fold}). For hybrid and Filippov systems the second equilibrium is a {\em pseudo-equilibrium} on the switching surface.

But BEBs readily generate other dynamics. For two-dimensional systems this is well understood; we refer to \cite{FrPo98,Simpson2012} for continuous, non-differentiable ODEs, \cite{Gl16d,HoHo16,KuRi03} for Filippov systems, and \cite{DiBernardo2008,Si22b} for hybrid systems. However, the dynamical complexity increases with dimension \cite{GlJe15} and already in three dimensions BEBs can create chaotic attractors regardless of whether the system is continuous \cite{Simpson2016}, Filippov \cite{CaNo20,Gl18,NoTe19,Simpson2018b}, or hybrid \cite{DiBernardo2008}.

Recent work of two of the authors \cite{HoCh23} revealed a BEB with unusual characteristics in an eight-dimensional hybrid system modelling the motion of an airfoil. The system has a stable regular equilibrium corresponding to the airfoil in a fixed position. As the air velocity (a model parameter) is increased, the equilibrium undergoes a BEB whereby the airfoil flap hits a clamped stop. This bifurcation is of persistence type, leading to a stable pseudo-equilibrium where the flap is in contact with the stop. But the BEB also creates a stable limit cycle, and this is particularly interesting because the system then has multiple attractors and because such a transition is not possible for BEBs in two dimensions.

In this paper, to understand how such a BEB can arise we consider curves of BEBs in two-dimensional parameter space. Qualitative changes to the dynamics associated with the BEB occur at codimension-two points, such as where the regular equilibrium is non-hyperbolic. It is known that if the equilibrium has a zero eigenvalue then generically a curve of saddle-node bifurcation emanates from the curve of BEBs with a quadratic tangency, while if it has a pair of purely imaginary eigenvalues then generically a curve of Hopf bifurcations emanates transversally from the curve of BEBs \cite{DiPa08,GuSe11,Si10,SiKo09}. But in the airfoil model it is the limit cycle that is non-hyperbolic at a nearby codimension-two point. Such codimension-two BEBs do not appear to have been studied before, possibly being overlooked because they require at least three dimensions to occur generically.

Specially, we unfold two codimension-two scenarios where a BEB creates a non-hyperbolic limit cycle. We show that if the limit cycle has a Floquet multiplier of $1$ then in two-dimensional parameter space a curve of saddle-node bifurcations (of limit cycles) emanates transversely from a curve of BEBs. On one side of the codimension-two point, the BEB creates two limit cycles, one of which can be stable and coexisting with a stable pseudo-equilibrium as observed in the airfoil model. We stress that the saddle-node and BEB curves meet transversally, which is different to previously reported cases where the saddle-node bifurcations are of equilibria \cite{DeDe12,SiKo09}. We also show that if the limit cycle has a Floquet multiplier of $-1$, then a curve of period-doubling bifurcations emanates transversely from the curve of BEBs. These unfoldings apply to impacting hybrid systems of three or more dimensions.

The remainder of this paper is organized as follows. \Cref{sec:hybrid} introduces general notation for impacting systems and BEBs, then \Cref{sec:main_results} presents the main results proved using blow-up and centre-manifold techniques. \Cref{sec:examples} illustrates the results with three-dimensional examples and the airfoil model, then \Cref{sec:conc} draws conclusions and suggests avenues for future work.


\section{Impacting hybrid systems}
    \label{sec:hybrid}

Many vibro-impacting mechanical systems are well-modelled by
treating objects as rigid and impact events as instantaneous.
With this viewpoint, impacts bring about a discontinuous change to the velocity of objects.
This leads to mathematical models that are hybrid systems,
combining ODEs for the smooth evolution in open intervals of time  with reset laws for impact events.
Such {\em impacting hybrid systems} usually have the property that
as the impact velocity goes to zero, the recoil velocity also goes to zero.
It follows that for grazing events (i.e.~zero-velocity impacts)
nearby trajectories stay near each other regardless of whether or not an impact occurs.
Such continuity of the flow does not occur for
more general hybrid systems such as impulsive systems \cite{HaCh06}.

Following \cite{Bernardo2007,HoCh23} we consider an impacting hybrid system that in local
co-ordinates close to a single impacting surface can be written in the form 
    \begin{equation}
        \begin{split}
            \dot{x} &= F(x;\xi), \quad \text{for $H(x;\xi) > 0$}, \\
            x &\mapsto R(x;\xi), \quad \text{for $H(x;\xi) = 0$},
        \end{split}
        \label{eq:hybridSystem}
    \end{equation}
where the vector field $F$, the reset law $R$, and the switching function $H$ are $C^k$ functions (with $k$ sufficient large) of the state variable $x \in \mathbb{R}^n$ and a parameter vector $\xi \in \mathbb{R}^m$. When considering codimension-two bifurcations we shall often choose $m=2$ and    
set $\xi=(\mu,\eta)$, where the BEB occurs for $\mu=0$. Where it is unequivocal, we shall sometimes suppress the parameter dependence altogether. 

Let us describe the basic features of the dynamics of
an impacting hybrid system \eqref{eq:hybridSystem}.
\Cref{fig:3D_DS_traj_interaction} illustrates the path of a typical trajectory in such a system. 
     \begin{figure}[ht!]
        \centering
            \includegraphics[width = 0.6 \linewidth]{./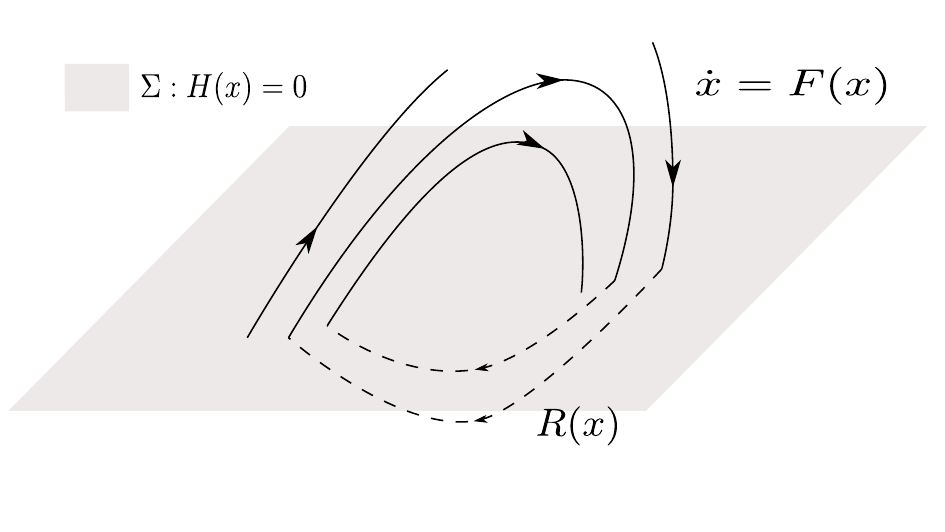}
        \caption{A three-dimensional sketch showing a trajectory of \eqref{eq:hybridSystem} intersecting the switching surface $\Sigma$ several times.}
        \label{fig:3D_DS_traj_interaction}
    \end{figure}
The trajectory follows the vector field $F$ until reaching the switching surface
$\Sigma = \left\{ x \in \mathbb{R}^n \,|\, H(x) = 0 \right\}$
where it is mapped under $R$ to a different point on $\Sigma$ then re-commences evolution under $F$.

We assume $\Sigma = \left\{ x \in \mathbb{R}^n \,|\, H(x) = 0 \right\}$ is a smooth codimension-one surface, so assume the gradient vector $\nabla H(x)$ is non-zero at all points on $\Sigma$.
The direction of $F(x)$ relative to $\Sigma$ is governed by the sign of the first Lie derivative
    \begin{equation}
    \label{eq:normal-incoming-velocity}
        v(x) = \mathcal{L}_F H(x) = \nabla H(x)^\transpose F(x),
    \end{equation}
    which is in keeping with mechanical systems we refer to as the velocity with respect to the impact surface.
    
We use this derivative to partition $\Sigma$ into three pieces,
the {\em incoming set} $\Sigma^- =
    \left\{ x \in \Sigma \,|\, v(x) < 0 \right\}$,
    the {\em outgoing set} $\Sigma^+ = \left\{ x \in \Sigma \,|\, v(x) > 0 \right\}$,
    and the {\em grazing set} $\Sigma^0 = \left\{ x \in \Sigma \,|\, v(x) = 0 \right\}$.

The reset law $R$ models impact events
where the recoil velocity tends to zero as the impact velocity tends to zero.
Hence we can write the reset law as
    \begin{equation}
    R(x) = x + W(x) v(x),
    \label{eq:R}
    \end{equation}
where $W$ is a smooth vector field, so that $R$ is the identity map on the grazing set. Note that by writing $v(R(x)) = -r(x)v(x)$, the
quantity $r(x)$ can be interpreted as a {\em coefficient of restitution}.
We assume $r(x) > 0$ for all $x \in \Sigma^- \cup \Sigma^0$
so that the reset law maps $\Sigma^-$ to $\Sigma^+$.
It is a simple exercise to show $r(x) = -1 - \nabla v(x)^{\transpose} W(x)$ at any $x \in \Sigma^0$.
Consequently
\begin{equation}
    \nabla v(x)^{\transpose} W(x) < -1
    \label{eq:NvW}
\end{equation}
at any $x \in \Sigma^0$ which will be important below.

It is useful to also define the acceleration relative to $\Sigma$: 
    $$
    a(x) = \mathcal{L}^2_F H(x) = \nabla v(x)^\transpose F(x).
    $$
If a trajectory hits $\Sigma^0$ at a point where $a(x)>0$ then
it {\em grazes} $\Sigma$ with a quadratic tangency. There is a rich literature on grazing bifurcations in impacting systems via the theory of discontinuity mappings
\cite{DiBu01,FrNo97,FrNo00,No91}.
 
Points that instead have $a(x)<0$ define the {\em sticking set}
$$
\Sigma^0_- = \left\{ x \in \Sigma^0 \,|\, a(x) < 0 \right\}.
$$ 
Typically trajectories can only approach $\Sigma^0_-$
via the accumulation of an infinite {\em chattering sequence} 
of impacts (sometimes referred to as a Zeno phenomenon)
\cite{JoEg99,ZhJo01}.
Once such accumulation has occurred, a consistent way within the piecewise-smooth framework to define future evolution
is to adapt the Filippov formalism \cite{Fili88}
and define the {\em sticking vector field}
    \begin{equation}
        F_s(x) = \left( I - \frac{W(x) \nabla v(x)^\transpose}{\nabla v(x)^\transpose W(x)} \right) F(x),
        \label{eq:Fs}
    \end{equation}
    which is tangent to $\Sigma^0$.
    {\em Sticking motion} refers to evolution on the codimension-two
    sticking set $\Sigma^0_-$ governed by $\dot{x} = F_s(x)$.
Notice \eqref{eq:Fs} is well-defined by \eqref{eq:NvW}.

This paper concerns bifurcations of equilibria,
which, for impacting hybrid systems of the form \eqref{eq:hybridSystem},
may be of two types: regular equilibria and pseudo-equilibria.
These are zeros of $F$ and $F_s$, respectively,
and are valid solutions to \eqref{eq:hybridSystem}
if they are `admissible', as defined below.

    \begin{definition}
    \label{def:1}
A {\em regular equilibrium} is a point $x^* \in \mathbb{R}^n$ for which $F(x^*) = 0$; it is {\em admissible} if $H(x^*) > 0$, {\em virtual} if $H(x^*) < 0$, and a {\em boundary equilibrium} if $H(x^*) = 0$. A {\em pseudo-equilibrium} is a point $x^* \in \Sigma^0$ for which $F_s(x^*) = 0$; it is {\em admissible} if $a(x^*) < 0$,
        and {\em virtual} if $a(x^*) > 0$.
    \end{definition}

\subsection{Boundary equilibrium bifurcations}
    \label{sec:beb}

As the parameter vector $\xi$ of an impacting hybrid system
\eqref{eq:hybridSystem} is varied,
a boundary equilibrium bifurcation (BEB)
occurs when a regular equilibrium $x^*(\xi)$ collides with
the switching surface $\Sigma$.
At the bifurcation, $x^*$ is a boundary equilibrium,
and also a pseudo-equilibrium with $a(x^*) = 0$
because all the Lie derivatives $\mathcal{L}^i_F H(x^*)$, for $i=1,2,\ldots$, are zero.

Consequently BEBs can be thought of as
a transition between regular and pseudo-equilibria,
and generically this transition conforms to one of two cases \cite{DiBernardo2008}.
In both cases the equilibria are unique, locally,
and each is admissible on one side of the bifurcation
and virtual on the other side of the bifurcation.
If they are admissible on different sides of the bifurcation
the BEB is referred as {\em persistence}
because if we only consider admissible features of the dynamics
then a single equilibrium is seen to persist.
If instead the equilibria are admissible on the same side
of the bifurcation the BEB is referred to as a {\em nonsmooth fold}
because, with the same mindset,
two equilibria are seen to collide and annihilate
much like a saddle-node bifurcation or fold.

While a complete characterisation of equilibria near generic BEBs
is available, except for two-dimensional systems \cite{DiBernardo2008} there is
very little theory on the existence and nature of other invariant sets
that can be created in BEBs.
Recent work of two of the authors \cite{HoCh23}
gave sufficient conditions for single-impact limit cycles to be born at a BEB, and a method to analyse their stability. In particular it was found that for
systems with more than two dimensions, multiple limit cycles can be born at a BEB
and a stable limit cycle can coexist with an admissible stable pseudo-equilibrium.
This motivates the present study on codimension-two bifurcations that change the number of limit cycles being born at a BEB.

Consider a system \eqref{eq:hybridSystem} with
parameter vector $\xi = (\mu,\eta) \in \mathbb{R}^2$
that undergoes a BEB when $\mu = 0$ for all values of $\eta$
(which later will be used to unfold codimension-two points).
For simplicity, suppose near-identity transformations have been made to flatten the switching surface $\Sigma$ so that  
the switching function $H$ can be written as
$$
    H(x;\mu,\eta) = C^\transpose x,
$$
for a constant $n\times 1$ vector $C$.
Since the vector fields $F$ and $W$ are smooth,
where $W$ appears in the reset law via \eqref{eq:R},
and $F(0;0,\eta) = 0$ for all $\eta$, we can write
    \begin{equation}
        \begin{split}
            F(x;\mu,\eta) &= A(\eta) x + M(\eta) \mu + \cO \left( \left( \|x\| + |\mu| \right)^2 \right), \\
            W(x;\mu,\eta) &= -B(\eta) + \cO \left( \|x\| + |\mu| \right),
        \end{split}
        \label{eq:IHS_series}
    \end{equation}
    where $A \in \mathbb{R}^{n \times n}$ and $B, M \in \mathbb{R}^n$
    are smooth functions of $\eta$.
    By substituting these into our earlier formulas, we obtain
    \begin{align}
        R(x;\mu,\eta) = (I - B C^\transpose A) x - B C^\transpose M \mu + \cO \left( \left( \|x\| + |\mu| \right)^2 \right).
    \end{align}

If $A$ is invertible, then, locally, \eqref{eq:hybridSystem} has a unique regular equilibrium at
    \begin{equation}
        x_{\rm reg}(\mu,\eta) = -A^{-1} M \mu + \cO \left( \mu^2 \right).
        \label{eq:regularEquilibrium}
    \end{equation}
    This equilibrium is admissible if $H(x_{\rm reg}(\mu,\eta);\mu,\eta) > 0$, where
    \begin{equation}
        H(x_{\rm reg}(\mu,\eta);\mu,\eta) = -C^\transpose A^{-1} M \mu + \cO \left( \mu^2 \right).
        \label{eq:xRegAdm}
    \end{equation}
    The eigenvalues associated with $x_{\rm reg}$ are $O(\mu)$ perturbations of the eigenvalues of $A$.
    So if all eigenvalues of $A$ have negative real part,
    then $x_{\rm reg}$ is admissible and asymptotically stable for all sufficiently small values of $\mu$
    with the sign of $\mu$ opposite to the sign of $C^\transpose A^{-1} M$.

By substituting the series \eqref{eq:IHS_series} into the sticking vector field \eqref{eq:Fs}, we obtain
    \begin{equation}
        F_s(x) = \left( I - \frac{B C^\transpose A}{C^\transpose A B} \right) (A x + M \mu) + \cO \left( \left( \|x\| +
        |\mu| \right)^2 \right),
        \label{eq:Fs2}
    \end{equation}
where $C^\transpose A B > 0$ (because $\nabla v(x)^\transpose W(x) < 0$ at points on $\Sigma^0$, see \eqref{eq:NvW}).
The Jacobian matrix of $F_s$ at $(x;\mu) = (0;0)$ is
    $$
    A_s = \left( I - \frac{B C^\transpose A}{C^\transpose A B} \right) A.
    $$
This matrix has a zero eigenvalue with algebraic multiplicity at least two
(because the codimension-two sticking set is invariant under $F_s$).
If the algebraic multiplicity is exactly two (as is generically the case),
then, locally, \eqref{eq:hybridSystem} has a unique pseudo-equilibrium
$x_{\rm ps}(\mu,\eta)$ with $x_{\rm ps}(0,\eta) = 0$ for all $\eta$.
By  \cref{def:1} the pseudo-equilibrium is admissible if $a(x_{\rm ps}(\mu,\eta);\mu,\eta) < 0$,
and after routine algebraic manipulation \cite{DiBernardo2008} we obtain
    \begin{equation}
        a(x_{\rm ps}(\mu,\eta);\mu,\eta) = \frac{C^\transpose A B C^\transpose A^{-1} M}{C^\transpose A^{-1} B} \mu + \cO
        \left( \mu^2 \right).
        \label{eq:xPsAdm}
    \end{equation}

If we restrict our attention to the sticking set,
then the eigenvalues associated with $x_{\rm ps}$ are the $n-2$ non-zero eigenvalues of $A_s$.
These are the eigenvalues of an $(n-2) \times (n-2)$ matrix $\hat{A}_s$
defined as the Jacobian matrix of the restriction of $F_s$ to the sticking set.
So if all eigenvalues of $\hat{A}_s$ have negative real part,
then $x_{\rm reg}$ is an asymptotically stable equilibrium of $F_s$ on $\Sigma_0$.

The signs of \eqref{eq:xRegAdm} and \eqref{eq:xPsAdm}
determine the admissibility of the equilibria,
so we can use the coefficients of their leading order terms
to the classify the BEB as persistence or a nonsmooth fold.

\begin{theorem}[\cite{DiBernardo2008}]
\label{th:CAB}
Consider an impacting hybrid system \eqref{eq:hybridSystem}
written with \eqref{eq:IHS_series}.
Suppose $A$ is non-singular, $C^\transpose A B > 0$, and $C^\transpose A^{-1} M \ne 0$.
If $C^\transpose A^{-1} B < 0$ the BEB at $\mu = 0$ corresponds to persistence,
while if $C^\transpose A^{-1} B > 0$ it is a nonsmooth fold.
\end{theorem}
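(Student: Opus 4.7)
The plan is to derive the classification directly from the leading-order expansions already established --- \eqref{eq:xRegAdm} for $H(x_{\rm reg};\mu,\eta)$ and \eqref{eq:xPsAdm} for $a(x_{\rm ps};\mu,\eta)$ --- combined with the admissibility inequalities of \cref{def:1}. Since each expansion is linear in $\mu$ to leading order and the hypotheses of the theorem render both leading coefficients nonzero, the entire classification reduces to comparing the signs of two scalars.

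First I would read off the two admissibility criteria. By \cref{def:1} and \eqref{eq:xRegAdm}, for $|\mu|$ small enough the regular equilibrium $x_{\rm reg}$ is admissible precisely on the side of $\mu=0$ satisfying
\[
-C^\transpose A^{-1} M \, \mu > 0.
\]
Likewise, by \cref{def:1} and \eqref{eq:xPsAdm}, together with the hypothesis $C^\transpose A B > 0$, the pseudo-equilibrium $x_{\rm ps}$ is admissible precisely on the side satisfying
\[
\frac{C^\transpose A^{-1} M}{C^\transpose A^{-1} B} \, \mu < 0.
\]
The non-degeneracy hypothesis $C^\transpose A^{-1} M \ne 0$, together with the case assumption $C^\transpose A^{-1} B \ne 0$, guarantees that each inequality selects a unique side of the bifurcation for all sufficiently small $|\mu|$.

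Comparing the two inequalities then gives the dichotomy. Multiplying the second inequality through by $C^\transpose A^{-1} B$, the admissibility condition for $x_{\rm ps}$ becomes $C^\transpose A^{-1} M \, \mu > 0$ when $C^\transpose A^{-1} B < 0$ (the inequality flips) and $C^\transpose A^{-1} M \, \mu < 0$ when $C^\transpose A^{-1} B > 0$. Placing this beside the criterion $C^\transpose A^{-1} M \, \mu < 0$ for $x_{\rm reg}$, one sees that in the first case $x_{\rm reg}$ and $x_{\rm ps}$ are admissible on opposite sides of $\mu = 0$ (persistence), whereas in the second case they are admissible on the same side (nonsmooth fold). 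The only technical point is to verify that the $\cO(\mu^2)$ remainders in \eqref{eq:xRegAdm} and \eqref{eq:xPsAdm} do not upset these sign conclusions; this follows by standard asymptotics since the leading coefficients are nonzero under the stated hypotheses, after possibly shrinking $|\mu|$. I do not anticipate any substantive obstacle, as the genuinely computational work has already been absorbed into deriving \eqref{eq:xRegAdm} and \eqref{eq:xPsAdm}.
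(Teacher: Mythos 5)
Your proposal is correct and follows essentially the same route as the paper, which justifies the theorem by comparing the signs of the leading-order admissibility expressions \eqref{eq:xRegAdm} and \eqref{eq:xPsAdm} under the hypothesis $C^\transpose A B > 0$ (the paper states this sign comparison immediately before the theorem and defers the algebra for \eqref{eq:xPsAdm} to the cited reference). Your case analysis on the sign of $C^\transpose A^{-1} B$ and the remark about the $\cO(\mu^2)$ remainders are exactly the intended argument.
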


\paragraph{Remarks}
\begin{enumerate}
\item 
The proof of Theorem \ref{th:CAB} is based on a co-ordinate {\em blow-up}
that leads to a scale-invariant normal form.  
Specifically, for small $\mu \neq 0$, under $y = \frac{x}{\mu}$
the system \eqref{eq:hybridSystem} becomes, to leading order, 
    \begin{equation}
        \begin{split}
            \dot{y} &= A y + M\sigma , \qquad \text{for } C^\transpose y > 0, \\
            y &\mapsto (I - B C^\transpose A) y - B C^\transpose M \sigma,            
            \qquad \text{for } C^\transpose y =0,
        \end{split}
        \label{eq:hybridSystemScaledTruncated}
    \end{equation}
    where $\sigma = \mbox{sign}(\mu)$.
Thus, to leading order, the dynamics near a BEB are scale invariant for each sign of $\mu$. In what follows, we describe scenarios that require us to go beyond this leading-order form; nevertheless, the blown-up co-ordinate $y$ will prove invaluable. 
  \item 
The basic classification provided by  \cref{th:CAB} is intimately connected to the stability of the equilibria.
In particular, we find \cite[Eq.~50]{DiBernardo2008}
    \begin{equation}
        \det \big( \hat{A}_s \big) = -\frac{C^\transpose A^{-1} B}{C^\transpose A B} \,\det(A).
        \label{eq:DiNo08eqn50}
    \end{equation}
So if both equilibria are asymptotically stable,
then all eigenvalues of $A$ and $\hat{A}_s$ have negative real part,
    thus $\det(A)$ and $\det \big( \hat{A}_s \big)$ have the same sign
    (they are both negative if $n$ is odd and both positive if $n$ is even).
    In this case $C^\transpose A^{-1} B < 0$ because $C^\transpose A B >0$,
    so the boundary equilibrium bifurcation corresponds to persistence.
    More generally \eqref{eq:DiNo08eqn50} allows us to classify the
    bifurcation from the number of positive eigenvalues of $A$ and $\hat{A}_s$,
    as done originally by Feigin \cite{Feigin78} for a related non-smooth bifurcation of maps.
\end{enumerate}

Next we consider limit cycles of \eqref{eq:hybridSystem}.

\subsection{Single-impact limit cycles}
    \label{sec:poincare}

Suppose an impacting hybrid system \eqref{eq:hybridSystem}
has a single-impact period-$T$ limit cycle.
This limit cycle corresponds to a point $x^* \in \Sigma^+$ satisfying
\begin{equation}
  x^* = \varphi(R(x^*),T), \quad \mbox{ with }
H(\varphi(R(x^*),t)) \neq 0, \mbox{ for all } 0<t<T,
        \label{eq:composedReturnMap}
\end{equation}
where $\varphi$ denotes the flow of the vector field $F$.
The stability of the limit cycle can be determined from the derivative of $\varphi(R(x),T)$, which can be evaluated by multiplying
a {\em saltation matrix} with the monodromy matrix associated with $F$, \cite{DiBernardo2008,HoCh23}.
    
In \cite{HoCh23} we showed how to convert the problem of the existence of such a limit cycle of the truncated form \eqref{eq:hybridSystemScaledTruncated} into an eigenproblem involving the matrix exponential $\exp({ AT})$. Here we
treat the general system \eqref{eq:hybridSystem} and relate
limit cycles to fixed points of a Poincar\'e map.
Care is needed to ensure the map is well-defined arbitrarily
close to the BEB at $\mu = 0$ where the limit cycle shrinks to a point,
and we use blown-up coordinates to achieve this.

Without loss of generality, let us consider limit cycles
on the $\mu > 0$ side of the BEB.
Again we use the blow-up
\begin{equation}
y = \frac{x}{\mu},
\label{eq:blowUp}
\end{equation}
that for $\mu > 0$
converts \eqref{eq:hybridSystem} to the form
    \begin{equation}
        \begin{split}
            \dot{y} &= A y + M + \cO(\mu), \qquad \text{for } C^\transpose y > 0, \\
            y &\mapsto (I - B C^\transpose A) y - B C^\transpose M + \cO(\mu),           
            \qquad \text{for } C^\transpose y =0.
        \end{split}
        \label{eq:hybridSystemScaled}
    \end{equation}
Unlike in \eqref{eq:hybridSystemScaledTruncated} we have not truncated,
so \eqref{eq:hybridSystemScaled} is conjugate to \eqref{eq:hybridSystem}
for $\mu > 0$.
But although \eqref{eq:hybridSystemScaled} was constructed
subject to the assumption $\mu > 0$, {\em functionally} it is well-defined
and can be extended smoothly
for all values of $\mu$ in a neighborhood of $0$.
This observation is crucial as it allows 
us to use the implicit function theorem in the proofs below.
Note, we use tildes for quantities ($F$, $H$, $\Sigma$ etc) defined in terms
of $y$ rather than $x$. 

Let $Q$ be an $(n-1) \times n$ matrix with the property that
    $$
    S = \begin{bmatrix}
            C^\transpose \\ Q
    \end{bmatrix}
    $$
is invertible. Then $u = \zeta(y) = Q y$ is a bijection from 
$\tilde{\Sigma}$ to $\mathbb{R}^{n-1}$.
Next we define 
$$
\Pi^- = \zeta(\tilde{\Sigma}^-), \quad 
\Pi^+ = \zeta(\tilde{\Sigma}^+), \quad 
\Pi^0 = \zeta(\tilde{\Sigma}^0),
$$
and a Poincar\'e map $P : \Pi^+ \to \Pi^+$
for the system \eqref{eq:hybridSystemScaled} by
    \begin{equation}
        P(u;\mu,\eta) = \zeta \big( \tilde{R}(y^-) \big),
        \label{eq:P}
    \end{equation}
where $y^-$ is the first point at which the forward orbit of $\zeta^{-1}(u) \in \tilde{\Sigma}^+$ under $\tilde{F}$ reaches $\tilde{\Sigma}^-$, assuming it ever does so.
    This is illustrated in \Cref{fig:3D_poincare_section_returnMap}.
     \begin{figure}[ht!]
        \centering
        \vspace{-1em}
            \includegraphics[angle=90,width = 0.6 \linewidth]{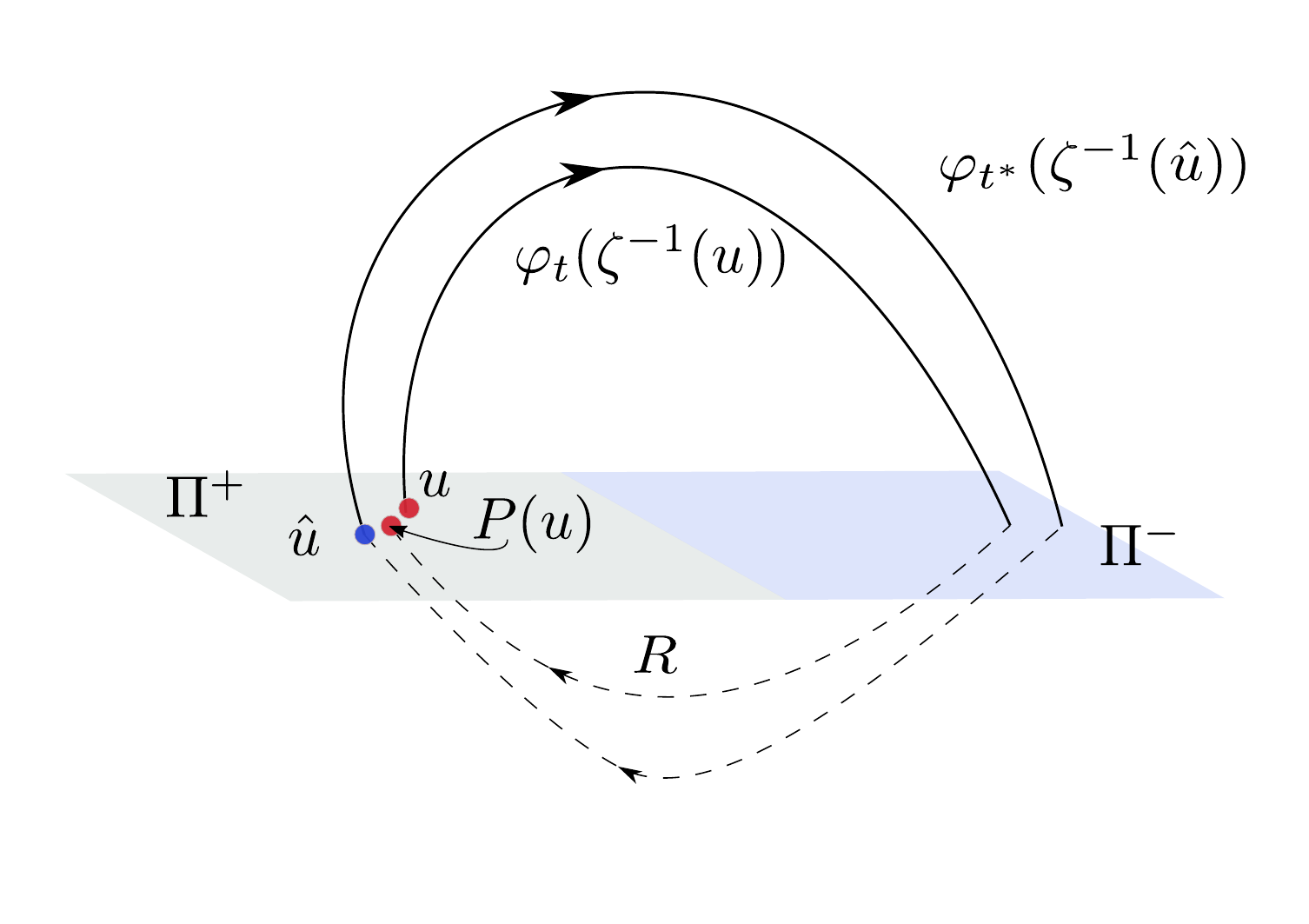}
        \vspace{-2em}
        \caption{A three-dimensional sketch illustrating the action of
        the Poincar\'e map $P$, defined by \eqref{eq:P}.
        A fixed point $\hat{u}$ of $P$ with $\mu > 0$ corresponds
        to a limit cycle of \eqref{eq:hybridSystem}.}
        \label{fig:3D_poincare_section_returnMap}
    \end{figure}
    If $\hat{u}$ is a fixed point of $P$ with $\mu > 0$,
    then $x^{*} = \mu \zeta^{-1}(\hat{u}) \in \Sigma^+$
    belongs to a single-impact periodic orbit of \eqref{eq:hybridSystem}.
    The eigenvalues of the Jacobian matrix $\rD P(u)$ are the non-trivial Floquet multipliers of the periodic orbit.
    The following technical result shows that $P$ is smooth.

    \begin{lemma}
        Suppose $F$ and $R$ in \eqref{eq:hybridSystem} are $C^k$ ($k \ge 2$).
        If $P$ is well-defined at $(u_0;\mu_0,\eta_0)$, where $u_0 \in \Pi^+$, $\mu_0 \in \mathbb{R}$, and $\eta_0 \in \mathbb{R}$,
        then it is well-defined and $C^{k-1}$ in a neighbourhood of $(u_0;\mu_0,\eta_0)$.
        \label{le:PisSmooth}
    \end{lemma}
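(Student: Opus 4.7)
The plan is to show that the Poincar\'e map decomposes as a composition of three ingredients --- the flow of $\tilde F$, a first-hitting time, and the reset $\tilde R$ post-composed with the chart $\zeta$ --- and that each is $C^{k-1}$ at and near $(u_0;\mu_0,\eta_0)$. My first step is to verify the smoothness claim made after \eqref{eq:hybridSystemScaled}, namely that $\tilde F$ and $\tilde R$ extend $C^{k-1}$ smoothly through $\mu=0$. Because $F(0;0,\eta)=0$, two applications of Hadamard's lemma give a decomposition $F(x;\mu,\eta)=\tilde A(x,\mu,\eta)x+\tilde M(x,\mu,\eta)\mu$ with $\tilde A, \tilde M$ of class $C^{k-1}$. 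Substituting $x=\mu y$ and dividing by $\mu$ yields
\begin{equation*}
\tilde F(y;\mu,\eta)=\tilde A(\mu y,\mu,\eta)y+\tilde M(\mu y,\mu,\eta),
\end{equation*}
which is manifestly $C^{k-1}$ in $(y,\mu,\eta)$ with no singularity at $\mu=0$; an analogous manipulation handles $\tilde R$ via \eqref{eq:R}, since $v(\mu y;\mu,\eta)/\mu=C^{\transpose}\tilde F(y;\mu,\eta)$. Standard ODE theory then gives that the flow $\tilde\varphi(y,t;\mu,\eta)$ is jointly $C^{k-1}$ on its domain of definition.

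Next I would apply the implicit function theorem to the scalar equation $\tilde H(\tilde\varphi(\zeta^{-1}(u),t;\mu,\eta))=0$ viewed as an equation for $t$ in terms of $(u;\mu,\eta)$. The hypothesis that $P$ is well-defined at $(u_0;\mu_0,\eta_0)$ supplies a first time $T_0>0$ at which the corresponding trajectory reaches $\tilde\Sigma^-$; at this point $\tilde v<0$ by definition of $\tilde\Sigma^-$, so
\begin{equation*}
\partial_t\bigl[\tilde H\circ\tilde\varphi\bigr]\bigl|_{t=T_0}=\tilde v(\tilde\varphi(\zeta^{-1}(u_0),T_0;\mu_0,\eta_0))<0,
\end{equation*}
which is the transversality required. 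The IFT, applied in $C^{k-1}$ category, produces a $C^{k-1}$ hitting-time function $T(u;\mu,\eta)$ with $T(u_0;\mu_0,\eta_0)=T_0$ in a neighbourhood of $(u_0;\mu_0,\eta_0)$.

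The step I expect to demand the most care is promoting this locally-defined root to the genuine \emph{first} hit time that enters the definition of $P$. Concretely, I must exclude the possibility that nearby trajectories strike $\tilde\Sigma$ at some time smaller than $T(u;\mu,\eta)$. For the nominal trajectory, $\tilde H\circ\tilde\varphi$ is strictly positive on $(0,T_0)$, and by compactness it is bounded below by some $c>0$ on any sub-interval of the form $[\delta,T_0-\delta]$; at the endpoint $t=0$ the starting point lies in $\tilde\Sigma^+$ where $\tilde v>0$, so $\tilde H\circ\tilde\varphi$ initially increases away from zero, and at $t=T_0$ the strict inequality $\tilde v<0$ shows the root is simple. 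Continuous dependence of $\tilde\varphi$ on $(u,\mu,\eta)$ then transfers all three properties to nearby data, so on a small enough neighbourhood $T(u;\mu,\eta)$ is the first positive hit time. Finally, since $\zeta^{-1}$ and $\zeta$ are linear bijections, the formula
\begin{equation*}
P(u;\mu,\eta)=\zeta\bigl(\tilde R\bigl(\tilde\varphi(\zeta^{-1}(u),T(u;\mu,\eta);\mu,\eta);\mu,\eta\bigr)\bigr)
\end{equation*}
exhibits $P$ as a composition of $C^{k-1}$ maps, completing the argument.
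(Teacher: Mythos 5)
Your proof is correct and follows essentially the same route as the paper's: the scaled system is $C^{k-1}$, the implicit function theorem applied to $\tilde H\circ\tilde\varphi=0$ (with transversality supplied by $\tilde v<0$ on $\tilde\Sigma^-$) yields a $C^{k-1}$ hitting time, and $P$ is then a composition of $C^{k-1}$ maps. You additionally spell out two points the paper leaves implicit --- the Hadamard-lemma justification that the blown-up vector field extends smoothly through $\mu=0$, and the verification that the IFT root is genuinely the \emph{first} hitting time --- both welcome, but neither changes the structure of the argument.
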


    \begin{proof}
        The scaled system \eqref{eq:hybridSystemScaled} is $C^{k-1}$ because we have divided by $\mu$,
        so the vector field $\tilde{F}$ admits a $C^{k-1}$ flow $\varphi(y,t;\mu,\eta)$ \cite{Arnold73}.
        Thus $J(y,t;\mu,\eta) = \tilde{H}(\varphi(y,t;\mu,\eta))$ is a $C^{k-1}$ function.
        Let $y^+ = \zeta^{-1}(u_0)$ and $\tau$ be the evolution time between $y^+$ and $y^-$
        belonging to $\tilde{\Sigma}^-$.
        That is, $y^- = \varphi(y^+,\tau;\mu_0,\eta_0)$,
        and so $J(y^+,\tau;\mu_0,\eta_0) = 0$.
        Observe $\frac{\partial J}{\partial t}(y^+,\tau;\mu_0,\eta_0) = \nabla \tilde{H}(y^-)^\transpose \tilde{F}(y^-;\mu_0,\eta_0)$
        is negative because $y^-$ belongs to $\tilde{\Sigma}^-$.
        Thus, locally, we can solve $J(y,t;\mu,\eta) = 0$ for $t$:
        by the implicit function theorem there exists a unique $C^{k-1}$ function $T$ defined on a neighbourhood
        $\mathcal{U}$ of $(y^+;\mu_0,\eta_0)$
        such that $J(y,T(y;\mu,\eta);\mu,\eta) = 0$
        for all $(y;\mu,\eta) \in \mathcal{U}$.
        So the forward orbit of any such $y$ under $\tilde{F}$ returns to
        $\tilde{\Sigma}$ at the point $\varphi(y,T(y;\mu,\eta);\mu,\eta)$.
        Moreover, we can assume $\mathcal{U}$ has been taken small enough
        that this is the first point at which the orbit returns to $\tilde{\Sigma}$.
        So for any $(u;\mu,\eta)$ such that
        $\left( \zeta^{-1}(u);\mu,\eta \right) \in \mathcal{U}$,
        the Poincar\'e map $P(u;\mu,\eta)$ is well-defined and
        $P(u;\mu,\eta) = \zeta \left( \tilde{R} \left( \varphi(y,T(y;\mu,\eta);\mu,\eta) \right) \right)$, where $y = \zeta^{-1}(u)$.
        Thus $P$ is $C^{k-1}$ because we have expressed it as a composition of $C^{k-1}$ functions.
    \end{proof}

In summary we have reduced questions of existence
and stability of single-impact limit cycles to
those of fixed points of a Poincar\'e map $P$.
Importantly $P$ is smooth so we can employ classical (smooth)
bifurcation theory to analyse bifurcations of the limit cycles.

\section{Main results}
\label{sec:main_results}

We shall now assume that a fixed point of $P$, defined by \eqref{eq:P}
exists at parameter values $(\mu,\eta)=(0,0)$.
As discussed above, $P$ is well-defined and smooth
for all values of $\mu$ in a neighbourhood of $0$
even though it only corresponds to dynamics of \eqref{eq:hybridSystem} for $\mu > 0$,
where this choice of sign was taken without loss of generality.
We then allow $\eta$ to unfold a degeneracy in the linearization about this fixed point.

    \subsection{BEB-saddle-node bifurcation}

We first consider the case where  
    the Poincar\'e map $P$ has a fixed point with stability multiplier $1$
at $(\mu,\eta) = (0,0)$.
Such a multiplier suggests the occurrence of a saddle-node bifurcation,
and indeed the theorem below explains the existence of a curve of saddle-node bifurcations of limit cycles
for the original system \eqref{eq:hybridSystem}.

    \begin{theorem}
        Suppose $F$ and $R$ in \eqref{eq:hybridSystem} are $C^k$ ($k \ge 3$) and $P(u;0,0)$ has a fixed point $\hat{u}$.
        Suppose $\rD P(\hat{u};0,0)$ has an eigenvalue $1$ of algebraic multiplicity one
        and no other eigenvalues with unit modulus.
        Let $w \in \mathbb{R}^{n-1}$ and $v \in \mathbb{R}^{n-1}$ be left and right eigenvectors,
        respectively, for the eigenvalue $1$ with $w^\transpose v=1$ and suppose 
        \begin{align}
            w^\transpose \frac{\partial P}{\partial \eta} &\ne 0, \label{eq:transversalityConditionSN} \\
            w^\transpose \big[(\rD^2 P)(v,v) \big] &\ne 0. \label{eq:nondegeneracyConditionSN}
        \end{align}
        Then there exists $\delta > 0$ and a $C^{k-2}$ function $g : (-\delta,\delta) \to \mathbb{R}$ with $g(0) = 0$
        such that \eqref{eq:hybridSystem} has saddle-node bifurcations of single-impact limit cycles on $\eta = g
        (\mu)$ for all $0 < \mu < \delta$.
        \label{th:hybridSN}
    \end{theorem}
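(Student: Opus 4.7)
The plan is to reduce the fixed-point problem for $P$ to a scalar Lyapunov--Schmidt equation and then use the implicit function theorem to extract the saddle-node curve in two-parameter space. By \Cref{le:PisSmooth}, $P$ is $C^{k-1}$ in $(u;\mu,\eta)$ on a neighbourhood of $(\hat u;0,0)$, so I will treat
$G(u;\mu,\eta) := P(u;\mu,\eta) - u = 0$
as a smooth equation with the parameters $(\mu,\eta)$ entering jointly, and rely on the fact that for $\mu > 0$ solutions of $G=0$ correspond to single-impact limit cycles of \eqref{eq:hybridSystem} whose Floquet multipliers are the eigenvalues of $\rD P$.

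The first step is the Lyapunov--Schmidt splitting. Simplicity of the eigenvalue $1$ gives $\ker \rD G(\hat u;0,0) = \operatorname{span}(v)$ and $\operatorname{range}\rD G(\hat u;0,0) = \ker(w^\transpose)$, with $w^\transpose v = 1$ yielding a direct sum decomposition of $\mathbb{R}^{n-1}$. Writing $u = \hat u + sv + z$ with $w^\transpose z = 0$, the range equation $(I - vw^\transpose) G = 0$ can be solved by the implicit function theorem for a $C^{k-2}$ function $z = z(s;\mu,\eta)$ with $z(0;0,0) = 0$ and $\partial_s z(0;0,0) = 0$, since the restriction of $\rD G$ to $\ker(w^\transpose)$ is invertible.

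Substituting $z$ back gives the scalar bifurcation equation $\phi(s;\mu,\eta) := w^\transpose G(\hat u + sv + z;\mu,\eta)$. Using $w^\transpose(\rD P - I) = 0$ I will verify $\phi(0;0,0) = 0$ and $\partial_s \phi(0;0,0) = 0$; moreover $\partial_s^2 \phi(0;0,0) = w^\transpose[\rD^2 P(v,v)] \ne 0$ by \eqref{eq:nondegeneracyConditionSN} and $\partial_\eta \phi(0;0,0) = w^\transpose \partial_\eta P \ne 0$ by \eqref{eq:transversalityConditionSN}. A saddle-node bifurcation of the reduced map is characterised by the simultaneous vanishing of $\phi$ and $\partial_s \phi$; viewing these as two equations in the two unknowns $(s,\eta)$ with $\mu$ as the free parameter, the Jacobian at the origin is
\begin{equation*}
\begin{pmatrix} 0 & w^\transpose \partial_\eta P \\ w^\transpose[\rD^2 P(v,v)] & * \end{pmatrix},
\end{equation*}
whose determinant is nonzero by the two hypotheses. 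A second application of the implicit function theorem then produces $C^{k-2}$ functions $s(\mu)$ and $g(\mu)$ with $s(0)=g(0)=0$ solving $\phi = \partial_s \phi = 0$; restricting to $\mu \in (0,\delta)$ is what identifies this curve with saddle-node bifurcations of genuine single-impact limit cycles of \eqref{eq:hybridSystem}.

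The main difficulty is not any single step but the careful bookkeeping: one derivative is lost in the blow-up used to define $P$ (cf.~\Cref{le:PisSmooth}) and a further derivative is lost in the Lyapunov--Schmidt reduction combined with the implicit function theorem, which is precisely why the hypothesis $k \ge 3$ is needed to obtain the $C^{k-2}$ regularity of $g$ asserted in the theorem. A minor technical point I will have to verify along the way is that the $\partial_s^2 z$ contribution drops out of $\partial_s^2 \phi$ at the origin because $w^\transpose \rD G = 0$; once this is in hand, the nondegeneracy \eqref{eq:nondegeneracyConditionSN} guarantees that the reduced fold is a generic quadratic saddle-node rather than a higher-codimension degeneracy.
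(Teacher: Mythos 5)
Your argument is correct, but it takes a genuinely different route from the paper's. You perform a Lyapunov--Schmidt reduction on the fixed-point equation $G = P - u = 0$, splitting along $\ker(\rD P - I)=\operatorname{span}(v)$ and its spectral complement $\ker(w^\transpose)$, and then solve the pair $\phi = \partial_s\phi = 0$ simultaneously for $(s,\eta)$ as functions of $\mu$ via a single $2\times 2$ implicit function theorem whose Jacobian is nonsingular precisely because of \eqref{eq:transversalityConditionSN} and \eqref{eq:nondegeneracyConditionSN}. The paper instead restricts $P$ to a one-dimensional centre manifold, writes the restricted map in the explicit form \eqref{eq:1dSN} with coefficients given by \eqref{eq:adef}, and applies the implicit function theorem twice in sequence: first solving the fixed-point equation for $\eta$ (using $b\ne 0$), then solving the multiplier condition $\partial q/\partial z = 1$ for $z$ (using $c\ne 0$). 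The two reductions are equivalent for this static bifurcation and both land on $C^{k-2}$ regularity by the same derivative count (one derivative lost in the blow-up defining $P$, one more in differentiating the reduced equation). What the centre-manifold route buys is the explicit normal form and the slope formula $g'(0)=-a/b$ of \eqref{eq:SN_gradient}, which the paper reuses for the numerical comparisons and the amplitude approximations of \cref{sec:amp-approximation}; your route is arguably cleaner for pure existence, makes the roles of the two genericity conditions symmetric (together they render one determinant nonzero), and avoids invoking centre-manifold smoothness. In a full write-up you should spell out the two points you flag: that $\partial_s z(0;0,0)=0$ (it lies in $\ker(\rD P - I)\cap\ker(w^\transpose)=\{0\}$), so that $\partial_s^2\phi(0;0,0)=w^\transpose\bigl[(\rD^2P)(v,v)\bigr]$ with no contribution from $\partial_s^2 z$; and that at the quadratic fold of $\phi$ the coalescing fixed points of $P$ force an eigenvalue $1$ of $\rD P$ while the remaining spectrum stays off the unit circle, so the curve really consists of saddle-node bifurcations of single-impact limit cycles for $\mu>0$. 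Both verifications are routine.
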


    The codimension-two point $(\mu,\eta) = (0,0)$ is characterized
    by the BEB at $\mu = 0$ and the eigenvalue $1$ associated with the bifurcating limit
    cycle at $\eta = 0$.
     \Cref{th:hybridSN} contains several genericity conditions to ensure saddle-node bifurcations arise.
    The eigenvalue $1$ needs to be a simple eigenvalue and there cannot be any other eigenvalues with unit modulus.
    The {\em transversality condition} \eqref{eq:transversalityConditionSN} ensures $\eta$ unfolds the bifurcation in
    a generic fashion.
    The {\em non-degeneracy condition} \eqref{eq:nondegeneracyConditionSN} ensures $P$ has the required nonlinearity
    to support the creation of two limit cycles.
    These conditions appear in the classical saddle-node bifurcation theorem for maps \cite[pg.~148]{GuHo83}.
    In the proof below we in fact obtain a formula for $g'(0)$ in terms of the derivatives of the restriction of $P$
    to a centre manifold.

\begin{proof}
Since $P$ is smooth (\cref{le:PisSmooth}) classical (smooth) bifurcation theory can be employed.
The map $P$ has a $C^{k-1}$ centre manifold through $\hat{u}$ with local representation $u = h(s;\mu,\eta)$, where $s \in \mathbb{R}$. Let $\hat{s} \in \mathbb{R}$ be such that $\hat{u} = h(\hat{s};0,0)$. Let $p(s;\mu,\eta)$ denote the restriction of $P$ to the centre manifold,
and let $q(z;\mu,\eta)$ denote this map under the translation $z = s - \hat{s}$.
Then $q$ is $C^{k-1}$ and has the form
\begin{equation}
            q(z;\mu,\eta) = z + a \mu + b \eta + c z^2 + d \mu z + e \eta z +  m \mu^2 + n \eta^2 + \text{\sc h.o.t.},
\label{eq:1dSN}
        \end{equation}
where $a,b,c,d,e, m, n \in \mathbb{R}$
and {\sc h.o.t.}~contains all cubic and higher order terms.
Notice the coefficient of the $z$-term in \eqref{eq:1dSN} is
$1$ due to the assumption that $\hat{u}$ has a stability multiplier of $1$.
Following 
\cite[pg.~174]{Yuri98} we have 
\begin{equation}
a = w^\transpose \frac{\partial P}{\partial \mu}, \quad
b = w^\transpose \frac{\partial P}{\partial \eta}, \quad 
c= \frac{1}{2}w^\transpose \big[ (\rD^2 P)(v,v) \big],
\label{eq:adef}
\end{equation}
while formulas for $d$, $e$, $m$ and $n$ are not needed for this proof but are given in  \cref{sec:numerics-the-return-map} as they are important when unfolding the bifurcation (see instead \cref{sec:amp-approx-SN}).
Note that 
$b\neq 0$ 
by \eqref{eq:transversalityConditionSN} and $c \ne 0$ by \eqref{eq:nondegeneracyConditionSN}. Since $b \ne 0$, locally we can solve the fixed point equation $z = q(z;\mu,\eta)$ for $\eta$ giving
        \begin{equation}
            \eta = g_1(\mu,z) = -\frac{a}{b} \,\mu - \frac{c}{b} \,z^2 + \text{\sc h.o.t.},
            \nonumber
        \end{equation}
which is $C^{k-1}$ by the implicit function theorem.
The derivative of $q$ is
        $$
        \frac{\partial q}{\partial z}(z;\mu,\eta) = 1 + 2 c z + \text{\sc h.o.t.},
        $$
which is $C^{k-2}$.
Since $c \ne 0$, locally we can solve $\displaystyle \frac{\partial q}{\partial z}(z;\mu,g_1(\mu,z)) = 1$ for $z$ giving
        $$
        z = g_2(\mu) = \cO(\mu),
        $$
        which is $C^{k-2}$ by the implicit function theorem.
        Thus, saddle-node bifurcations occur on
        \begin{equation}
            \label{eq:SN_gradient}
            \eta = g_1(\mu,g_2(\mu)) = -\frac{a}{b} \,\mu + \cO \left( \mu^2 \right).
        \end{equation}
    \end{proof}

    \subsection{BEB-period-doubling bifurcations}
    \label{sec:pd}

The following result is a natural analogue of the previous theorem for the case of a multiplier $-1$ at $\eta=0$.
The transversality and non-degeneracy conditions are now
\eqref{eq:transversalityConditionPD} and \eqref{eq:nondegeneracyConditionPD}
respectively.
The latter condition appears in the classical period-doubling bifurcation theorem
and ensures a period-doubled solution is created in a generic fashion.
A derivation of \eqref{eq:nondegeneracyConditionPD}
is provided by Kuznetsov \cite[pgs.~184--185]{Yuri98}.
In \Cref{sec:numerics-the-return-map} we provide explicit
formulas for the constants in \eqref{eq:nondegeneracyConditionPD}
in terms of $P$ and its derivatives.

    \begin{theorem}
        Suppose $F$ and $R$ in \eqref{eq:hybridSystem} are $C^k$ ($k \ge 4$) and $P(u;0,0)$ has a fixed point $\hat{u}$.
        Suppose $\rD P(\hat{u};0,0)$ has an eigenvalue $-1$ of algebraic multiplicity one
        and no other eigenvalues with unit modulus.
        Locally the fixed point persists having a $C^{k-1}$ eigenvalue $\lambda(\mu,\eta)$ with $\lambda(0,0) = -1$;
        suppose
        \begin{equation}
            \frac{\partial \lambda}{\partial \eta}(0,0) \ne 0.
            \label{eq:transversalityConditionPD}
        \end{equation}
        and
        \begin{equation}
            c^2 + f \ne 0,
            \label{eq:nondegeneracyConditionPD}
        \end{equation}
        where $c$ and $f$ are the coefficients of quadratic and cubic terms,
        respectively, of the restriction of $P$ to the centre manifold.
        Then there exists $\delta > 0$ and a $C^{k-2}$ function $g : (-\delta,\delta) \to \mathbb{R}$ with $g(0) = 0$
        such that \eqref{eq:hybridSystem} has period-doubling bifurcations
        of single-impact limit cycles on $\eta = g(\mu)$ for all $0 < \mu < \delta$.
        \label{th:hybridPD}
    \end{theorem}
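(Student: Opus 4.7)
The plan is to mirror the proof of \cref{th:hybridSN}: invoke \cref{le:PisSmooth} to put the Poincar\'e map $P$ into the smooth setting, persist the fixed point with the implicit function theorem, reduce to a one-dimensional centre manifold at $(\mu,\eta)=(0,0)$, and apply the classical period-doubling theorem for one-parameter families of one-dimensional maps \cite{Yuri98} along a suitable curve in parameter space. A structural difference from the saddle-node case is that here the fixed point persists as $(\mu,\eta)$ vary (since $1$ is not an eigenvalue of $\rD P(\hat{u};0,0)$), so the curve we are after is not a fold of fixed points but the locus on which the central multiplier equals $-1$.

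First I would apply the implicit function theorem to $P(u;\mu,\eta)-u=0$ in the complement of the centre direction, producing a $C^{k-1}$ family of fixed points $\hat{u}(\mu,\eta)$ with $\hat{u}(0,0)=\hat{u}$. Translating so that this fixed point sits at the origin, there is a $C^{k-1}$ one-dimensional centre manifold tangent to the right eigenvector for $-1$, and the restriction of $P$ to it takes the form
\[
q(z;\mu,\eta) \;=\; \lambda(\mu,\eta)\,z \;+\; c(\mu,\eta)\,z^{2} \;+\; f(\mu,\eta)\,z^{3} \;+\; \cO(z^{4}),
\]
with $\lambda(0,0)=-1$ and with $c(0,0)$, $f(0,0)$ equal to the coefficients appearing in \eqref{eq:nondegeneracyConditionPD}. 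Since $\partial_\eta \lambda(0,0)\neq 0$ by \eqref{eq:transversalityConditionPD}, the implicit function theorem applied to $\lambda(\mu,\eta)+1=0$ yields a $C^{k-2}$ function $\eta=g(\mu)$, defined for $|\mu|<\delta$ with $g(0)=0$, on which the central multiplier of the persisting fixed point is exactly $-1$.

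Second I would verify that a genuine period-doubling bifurcation occurs as $\eta$ crosses $g(\mu)$ for each small $\mu>0$. On $\eta=g(\mu)$, the second iterate $q\circ q$ has $z=0$ as a fixed point with derivative $+1$, vanishing quadratic term, and cubic coefficient proportional to $c^{2}+f$; this is the standard computation performed in \cite[pgs.~184--185]{Yuri98}, carried out here with $c$, $f$, $\lambda$ and $\partial_\eta\lambda$ depending $C^{k-2}$-smoothly on $\mu$. The non-degeneracy assumption \eqref{eq:nondegeneracyConditionPD} combined with the transversal crossing $\partial_\eta \lambda \ne 0$ then places us squarely in the hypotheses of the classical flip bifurcation theorem for the one-parameter family of centre maps parametrised by $\eta - g(\mu)$, producing a branch of period-two points of $q$, hence of $P$, emerging from the fixed point on $\eta=g(\mu)$. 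Undoing the blow-up \eqref{eq:blowUp} (legitimate for $\mu>0$) converts these into period-doubled single-impact limit cycles of \eqref{eq:hybridSystem}.

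The main obstacle is the bookkeeping needed to check that the intrinsic coefficients $c(0,0)$ and $f(0,0)$ of the centre-manifold restriction truly coincide with the quantities featuring in \eqref{eq:nondegeneracyConditionPD}, which in the paper are derived from $P$ and its derivatives; the required formulas are exactly those collected in \cref{sec:numerics-the-return-map}. A secondary technicality is that the one-parameter flip theorem must be invoked uniformly on the slices $\mu=\text{const}>0$, rather than only at the codimension-two point itself; continuity of $\lambda$, $c$, $f$ and $\partial_\eta \lambda$ in $\mu$, together with the nondegeneracy in force at $\mu=0$, ensures these quantities remain in the admissible regime on a whole interval $0<\mu<\delta$, which is what is needed to conclude.
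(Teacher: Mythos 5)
Your proposal is correct and follows essentially the same route as the paper: reduce $P$ to a one-dimensional centre manifold, locate the persisting fixed point, and apply the implicit function theorem to the condition that its multiplier equals $-1$ (using \eqref{eq:transversalityConditionPD}), with the classical flip theorem and \eqref{eq:nondegeneracyConditionPD} guaranteeing a genuine period-doubling along the resulting curve. The only cosmetic difference is ordering—you persist the fixed point of the full map before restricting, whereas the paper restricts first and reads off the fixed point and multiplier from the explicit expansion \eqref{eq:1dPD}, which also yields the slope $-\tfrac{ac+d}{bc+e}$ of the bifurcation curve.
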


    \begin{proof}
        As in the proof of \cref{th:hybridSN}
        we restrict $P$ to a one-dimensional centre manifold
        and move the critical fixed point to the origin resulting in a map of the form
        \begin{equation}
            q(z;\mu,\eta) = -z + a \mu + b \eta + c z^2 + d \mu z + e \eta z + f z^3 + \text{\sc h.o.t.},
            \label{eq:1dPD}
        \end{equation}
        where $a,b,c,d,e,f \in \mathbb{R}$ and {\sc h.o.t.}~contains all terms
        that are quadratic in $\mu$ and $\eta$ and all cubic and higher order
        terms except the $z^3$-term which has been written explicitly
        because it is important below.
        Let $w \in \mathbb{R}^{n-1}$ and $v \in \mathbb{R}^{n-1}$
        be left and right eigenvectors,
        respectively, for the eigenvalue $-1$ and satisfying $w^\transpose v = 1$.
        Again, $a$, $b$, and $c$ are given explicitly by \eqref{eq:adef}. The coefficients $d$, $e$ and $f$ are slightly harder to compute, as they rely on careful consideration of the parameter dependence of the various co-ordinate transformations. A full derivation is given in  \cref{sec:numerics-the-return-map}. Note that these terms play a role in the unfolding of the dynamics, and ignoring them can alter the criticality of the various bifurcating orbits, and they are important imp making asymptotic predictions for the amplitudes of bifurcating orbits (see \cref{sec:amp-approximation}).
        
        By the implicit function theorem, locally there is a unique $C^{k-1}$ fixed point
        \[
            z = g_1(\mu,\eta) = \frac{a}{2} \,\mu + \frac{b}{2} \,\eta + \cO \left( \left( |\mu| + |\eta| \right)^2
            \right).
            \nonumber
        \]
    The stability multiplier of this fixed point is
        $$
        \lambda(\mu,\eta) = \frac{\partial q}{\partial z}(g_1(\mu,\eta);\mu,\eta)
        = -1 + (a c + d) \mu + (b c + e) \eta
        + \cO \left( \left( |\mu| + |\eta| \right)^2 \right),
        $$
        which is $C^{k-2}$.
        We have $b c + e \ne 0$, by the transversality condition \eqref{eq:transversalityConditionPD},
        so, by the implicit function theorem,
        locally the stability multiplier is $-1$ on a $C^{k-2}$ curve
        \begin{equation}
            \label{eq:PD_gradient}
            \eta = g_2(\mu) = -\frac{a c + d}{b c + e} \,\mu + \cO \left( \mu^2 \right).
        \end{equation}
        For sufficiently small values of $\mu$, period-doubled solutions are created on this curve
        because the non-degeneracy condition \eqref{eq:nondegeneracyConditionPD} is satisfied.
    \end{proof}

\section{Numerical verification}
\label{sec:examples}

In this section we perform bifurcation analyses
on examples to illustrate the unfoldings predicted by
\cref{th:hybridSN,th:hybridPD}.
We evaluate the coefficients in the one-dimensional forms
\eqref{eq:1dSN} and \eqref{eq:1dPD}
and use \eqref{eq:SN_gradient} and \eqref{eq:PD_gradient}
to determine the slope of the saddle-node and period-doubling bifurcation curves
at the codimension-two points,
and compare these to the results of numerical continuation.

\subsection{Numerical computation of the normal form coefficients}
    \label{sec:computation_and_analysis}

We first provide general comments on the manner by which the coefficients
in \eqref{eq:1dSN} and \eqref{eq:1dPD} can be computed.
If the vector field $F$ in \eqref{eq:hybridSystem} is linear,
these coefficients can be computed analytically.
This is because the flow can be expressed explicitly as a matrix exponential,
from which we can find explicit expressions for the Poincar\'{e} map $P$ up to an implicit
transcendental equation for the evolution time $T$. 
Care is needed when evaluating the derivatives of $P$ because
$T$ varies with the initial point
and this needs to be accounted for when differentiating.
Note that even though
$F$ is in this case linear,
$P$ will be nonlinear and satisfies the non-degeneracy conditions
\eqref{eq:nondegeneracyConditionSN} and \eqref{eq:nondegeneracyConditionPD}
in generic situations.
If instead $F$ is nonlinear, $P$ needs to be evaluated numerically
and its derivatives can be computed using linear variational equations
or finite differences.

For linear vector fields $F$ in the examples below, limit cycles in blown-up coordinates were
computed using the semi-analytic continuation method developed in \cite{HoCh23}.
For nonlinear $F$ they were computed from direct numerical simulations of the system. 

In this section codimension-two points are identified as follows.
We vary a second parameter
and solve numerically to find a value of this parameter at which $P$
has a fixed point with stability multiplier $1$ or $-1$ in the limit $\mu \to 0^+$
(flipping the sign of $\mu$ if needed).
This limit is evaluated by working in blown-up coordinates \eqref{eq:blowUp}
and setting $\mu = 0$.
We shift the second parameter so that the codimension-two point occurs at $0$,
and call it $\eta$ so that  \cref{th:hybridSN,th:hybridPD}
can be applied directly.
We then compute the critical eigenvectors 
$w$ and $v$ and the coefficients in \eqref{eq:1dSN} and \eqref{eq:1dPD}
as discussed above.
We can also use formulas obtained in the above proofs
to approximate the amplitude of the bifurcating limit cycles.
Details of these calculations are provided in \Cref{sec:numerics-the-return-map,sec:amp-approximation}.

All computations were carried out in {\sc matlab}
and the code is available at \url{https://github.com/HongTang973}
In what follows we write $\mathbf{e}_i$, where $i = 1,2,3, \cdots$,
for the $i^{\rm th}$ standard basis vector of $\mathbb{R}^n$.

\subsection{Three-dimensional systems}
\label{sec:3d-case-analysis}

We start by considering three-dimensional systems \eqref{eq:hybridSystem}
where the reset law has the form \eqref{eq:R}
and $F$, $W$, and $H$ are truncated from \cref{eq:IHS_series} with the form
\begin{equation}
        \begin{split}
            F(x;\mu,\eta) &= A\eta) x + M \mu + A_1 \mu x + 
            \epsilon (q_{1} x_1 x_2 + q_{2} x_1 x_3) \mathbf{e}_1 \,, \\
            W(x;\mu,\eta) &= -B, \\
            H(x;\mu,\eta) &= C^\transpose x,
        \end{split}
        \label{eq:codim2_NL_Nform}
\end{equation}
where $A , A_1\in \mathbb{R}^{3 \times 3}$,
$B, C, M \in \mathbb{R}^3$,
and $q_{1}, q_{2}, \epsilon \in \mathbb{R}$.
We first consider cases where $F$ is linear, i.e.~$\epsilon = 0$,
then show that the same phenomenon occurs when $F$ is nonlinear.
The term $A_1 \mu x$ (higher order in \cref{eq:IHS_series})
provides additional $\mu$-dependency.

\subsubsection{BEB-saddle-node bifurcations}
\label{subsec:beb-collides-with-the-sn-bifurcation}

\begin{Example}
\label{itm:3D_SN_case}
Suppose the coefficients in \eqref{eq:codim2_NL_Nform}
have the form
\begin{equation}
A = \begin{bmatrix} t & 1 & 0 \\ m & 0 & 1 \\ d & 0 & 0 \end{bmatrix}, \quad
B = \begin{bmatrix} 0 \\ b_2 \\ b_3 \end{bmatrix}, \quad
C = \mathbf{e}_1, \quad
M = -\mathbf{e}_3, \quad
A_1 = -\mathbf{e}_1^{\transpose} \mathbf{e}_1,
\label{eq:3D_SN_case}
\end{equation}
where $t, m, d, b_2, b_3 \in \mathbb{R}$.
Since $A$ is a companion matrix
(cf.~\cite{HoCh23}),
its parameters are given in terms of its eigenvalues
$\lambda_1$, $\lambda_2$, and $\lambda_3$ by
$$
t = \lambda_1 + \lambda_2 + \lambda_3 \,, \quad
m = -\lambda_1 \lambda_2 - \lambda_1 \lambda_3 - \lambda_2 \lambda_3 \,, \quad
d = \lambda_1 \lambda_2 \lambda_3 \,.
$$
We set the eigenvalues as
$$
\lambda_1 = -0.1 + 0.2 {\rm i}, \quad
\lambda_2 = -0.1 - 0.2 {\rm i}, \quad
\lambda_3 = -0.5.
$$
and fix $b_3 = 1.6$ and $\epsilon = 0$
and use $b_2$ as the second bifurcation parameter.
\end{Example}

The key quantities in \cref{th:CAB} are 
$$
C^{\transpose}AB = b_2, \qquad
C^{\transpose}A^{-1}B = \frac{b_3}{d}, \qquad
C^{\transpose}A^{-1}M = -\frac{1}{d},
$$
so with $b_2 > 0$ the BEB at $\mu = 0$ corresponds to persistence.

Numerically we found that $P$ in the limit $\mu \to 0^+$
has a fixed point with stability multiplier $1$
when $b_2 = b_2^0 \approx 1.7819$, and 
the assumptions of \cref{th:hybridSN}
hold using $\eta = b_2 - b_2^0$.
 for $\mu > 0$saddle-node bifurcations occur on
Thus a curve $b_2 = b_2^0 + g(\mu)$ with $g(0) = 0$.
We computed this curve numerically and it is shown in \Cref{fig:sn_codim2_curve_a}.
Also as a dashed line we show the theoretical linear approximation
$b_2 = b_2^0 - \frac{a}{b} \mu$, due to \cref{eq:SN_gradient},
where $a \approx 0.1695$ and $b \approx -0.0391$ for this example.
As expected, the slope $ \displaystyle -\frac{a}{b}$ appears to match that of the numerically computed curve
at the codimension-two point.

\begin{figure}[ht!]
	\centering
	\subfloat[ ]
		{
		\includegraphics[width = 0.45 \linewidth]{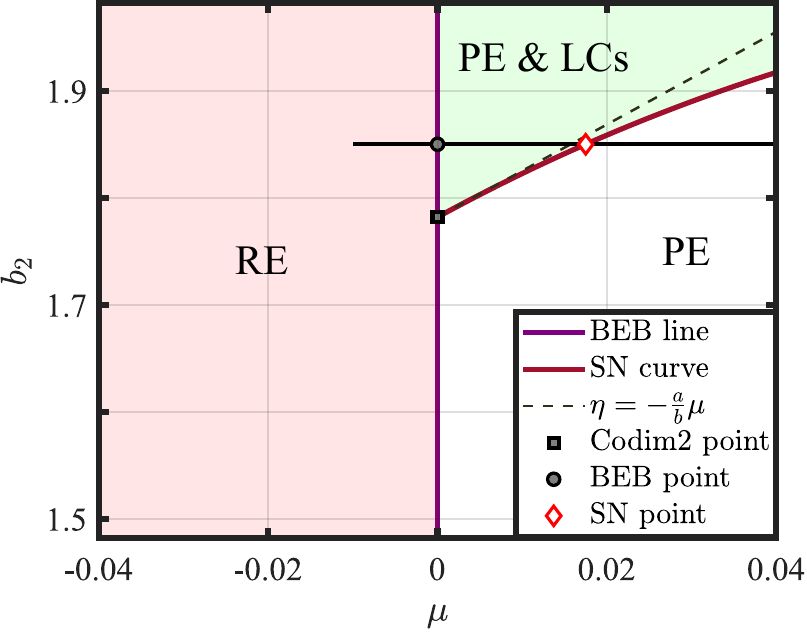}
		\label{fig:sn_codim2_curve_a}
	}
	\subfloat[ ]
	{
		\includegraphics[width = 0.45 \linewidth]{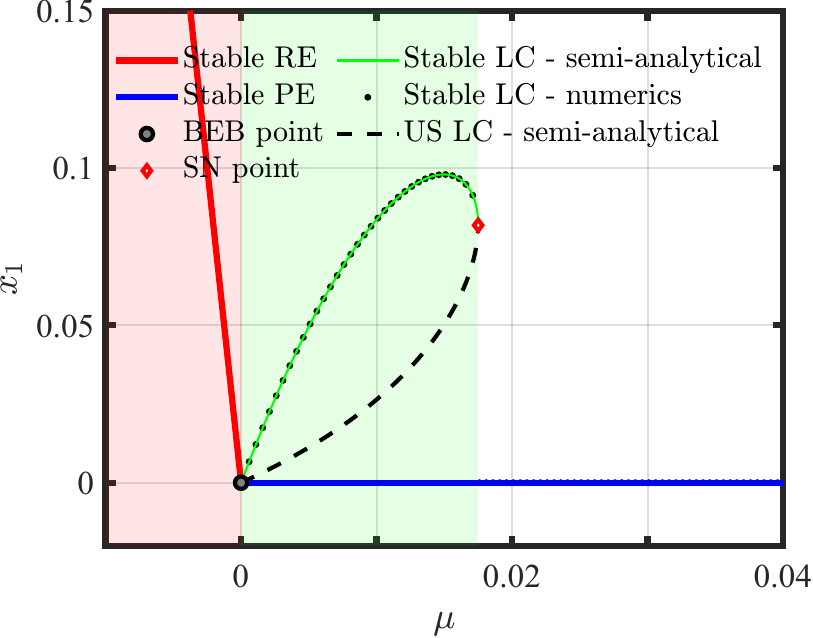}
		\label{fig:sn_codim2_curve_b}
	}
	\caption{
        (a) A two-parameter bifurcation diagram for \cref{itm:3D_SN_case}
        (RE: regular equilibrium; PE: pseudo-equilibrium; LC: limit cycle;
        SN: saddle-node bifurcation; BEB: boundary equilibrium bifurcation).
        (b) A one-parameter slice along the horizontal line at $b_2 = 1.85$ shown in panel (a).
    }
	\label{fig:codim2_SN_curve}
\end{figure}

The BEB and saddle-node bifurcation curves divide
the parameter plane locally into three regions.
For $\mu < 0$ the system has a stable regular equilibrium (RE),
for $\mu > 0$ below the saddle-node curve it has a stable pseudo-equilibrium (PE),
while for $\mu > 0$ above the saddle-node curve it has a stable pseudo-equilibrium
and stable and unstable single-impact limit cycles (LCs).

\begin{figure}[h!]
	\centering
	\subfloat[]{
		\includegraphics[width = 0.45 \linewidth]{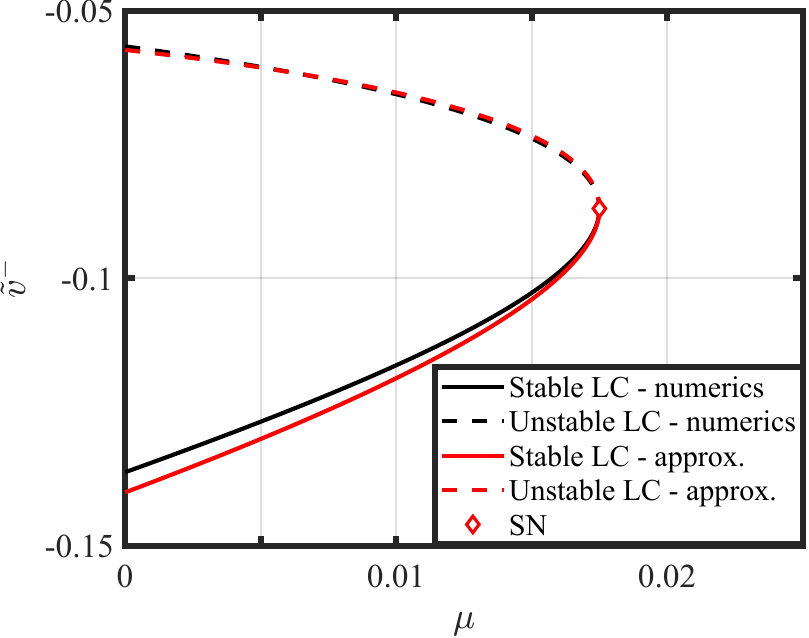}
		\label{fig:sn_codim2_unfold_a}
	}
	\subfloat[]{
		\includegraphics[width = 0.45 \linewidth]{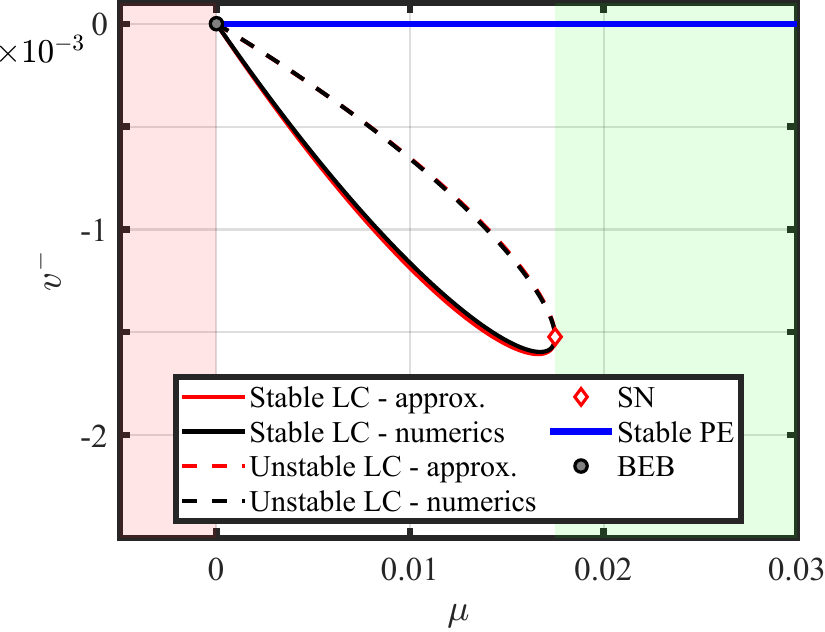}
		\label{fig:sn_codim2_unfold_b}
	}
	\caption{(a) A comparison of the impact velocities of the limit cycles
    in blown-up co-ordinates computed numerically (black) and via the normal-form approximation (red), using $b_2=1.85$ as in
    \Cref{fig:sn_codim2_curve_b}. (b) The same quantities in original co-ordinates;  the blue line denotes the stable pseudo-equilibrium.}
	\label{fig:sn_amplitude_approximation}
\end{figure}

\Cref{fig:sn_codim2_curve_b} shows a bifurcation diagram for a one-parameter slice through
both bifurcation curves defined by fixing $b_2 = 1.85$.
This shows the $x_1$-values of the regular and pseudo equilibria
and the maximum $x_1$-values attained by the stable and unstable limit cycles.
The limit cycles grow asymptotically linearly from the BEB at $\mu = 0$,
then collide and annihilate in the saddle-node bifurcation at
$\mu_{\rm SN} = g^{-1} \left( b_2 - b_2^0 \right) \approx 0.0175$.
To produce \Cref{fig:sn_codim2_curve_b} the stable limit cycle was continued numerically,
while both stable and unstable limit cycles were continued semi-analytically via the method in \cite{HoCh23}
(one point on the limit cycle was obtained by solving an eigenproblem, then
numerical integration carried out to obtain the maximum $x_1$-value of the limit cycle).

Next, we examine the impact velocities of the limit cycles.
These are shown in \Cref{fig:sn_codim2_unfold_a}
in blown-up coordinates and in \Cref{fig:sn_codim2_unfold_b}
in original coordinates.
That is, \Cref{fig:sn_codim2_unfold_b}
shows the velocity $v^- = \nabla H(x)^{\sf T} F(x)$
evaluated at the point $x$ at which the limit cycle returns to the
switching surface, and \Cref{fig:sn_codim2_unfold_a}
shows $\tilde{v}^- = \frac{v^-}{\mu}$.
As shown in \cref{sec:amp-approximation},
the impact velocities are well approximated by
\begin{equation}
\mathcal{A} = \mu \left( \ell_0  + \ell_1 \mu \pm L_{10}\sqrt{-\frac{a (\mu - \mu_{\rm SN})}{c}  + \frac{(d^2 - 4mc)(\mu - \mu_{\rm SN})^2}{4c^2} } - \frac{d(\mu - \mu_{\rm SN})}{2c}\right),
\end{equation}
where $a$, $c$, $d$ and $m$
are coefficients in the map \eqref{eq:1dSN} on the centre manifold,
and $\ell_0$ and $\ell_1$ are appropriate coefficients
for converting the position on the centre manifold
to the impact velocity.
We obtained
$a \approx 0.1695$, $b \approx -0.0391$,
$c \approx -1.3833$, $d \approx -5.2939$, $e \approx -0.0737$, $m \approx 4.5852$,
$\ell_0 \approx -0.0870$, $\ell_1 = 0 $ and $L_{10}  \approx0.1544$,
by numerically evaluating derivatives of $P$ at the
codimension-two point $(\mu,\eta) = (0,0)$
The resulting approximations to the impact velocities
are shown in \Cref{fig:sn_amplitude_approximation}
and seen to match well to those obtained via direct integration.
A better match can be achieved by evaluating the coefficients at
the saddle-node bifurcation $(\mu,\eta) = (\mu_{\rm SN}, 1.85)$.

\begin{Example}
    \label{itm:3D_SN_case_NL}
    Suppose the coefficients are as in \cref{itm:3D_SN_case}, except
    \qquad    $$ (a). \quad 
    q_{1} = -1, ~
    q_{2} = 0;
     \qquad
     (b). \quad
    q_{1} = 0, ~
    q_{2} = 1;
    $$
    and we allow various values of $\epsilon \ge 0$.
    \end{Example}
 expansion
Now with $\epsilon \ne 0$ the vector field $F$ is nonlinear.
The location of the codimension-two point
in unchanged from \cref{itm:3D_SN_case}
because the nonlinear terms contribute order-$\mu$
perturbations to the system in blown-up co-ordinates.
The nonlinear terms do affect the slope $-\frac{a}{b}$ of the curve
of saddle-node bifurcations.
This explains \cref{fig:nl_BEB_SN_diagram} which shows that
for small $\eta = b_2 - b_2^0 > 0$ two limit cycles are created
in the BEB at $\mu = 0$, but the location of the saddle-node bifurcation
where these are destroyed depends on the value of $\epsilon$.

\begin{figure}[h!]
	\centering
	\subfloat{
		\includegraphics[width = 0.45 \linewidth]{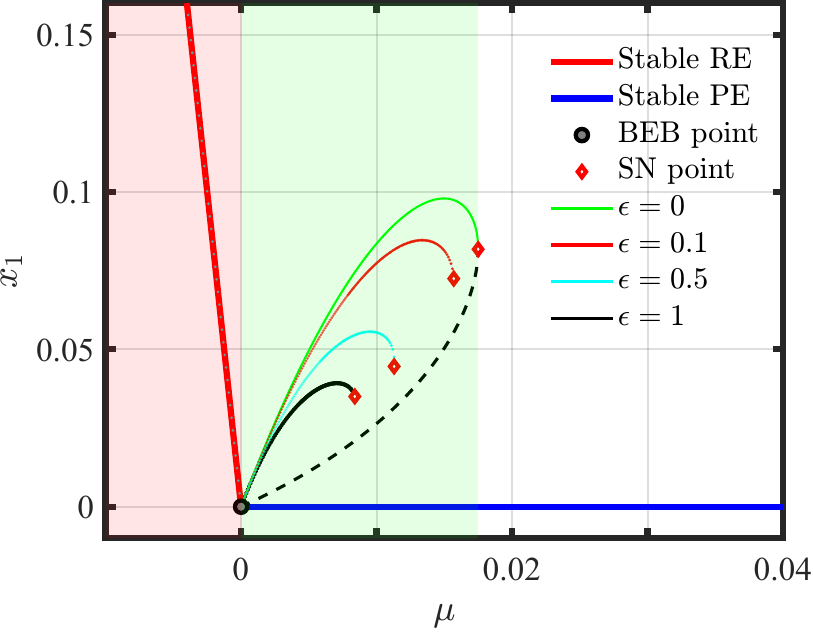}
		\label{fig:sn_codim2_unfold_a_nl}
	}
	\subfloat{
		\includegraphics[width = 0.45 \linewidth]{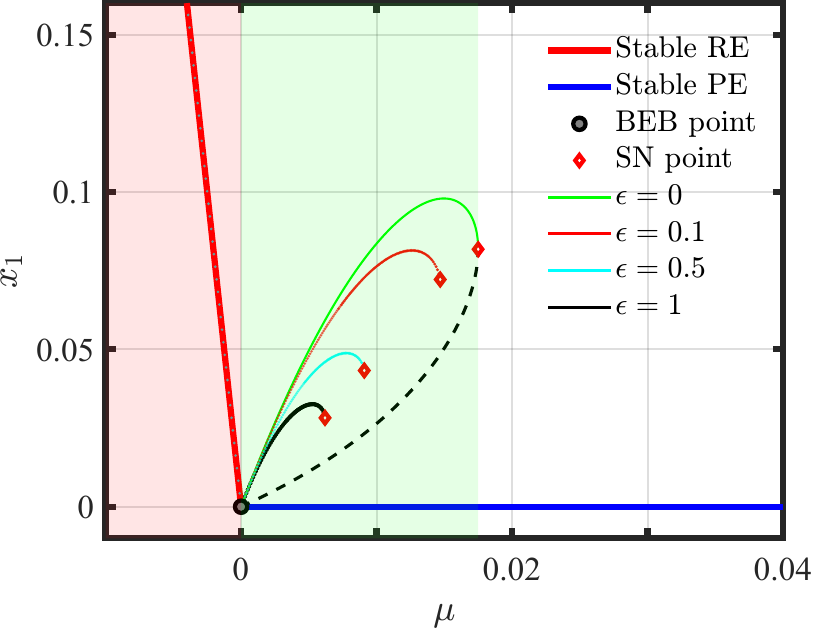}
		\label{fig:sn_codim2_unfold_b_nl}
	}
	\caption{One-parameter bifurcation diagrams for \cref{itm:3D_SN_case_NL}
    with $b_2 = 1.85$ and various values of $\epsilon \ge 0$.
    Panel (a) uses $q_1 = -1$ and $q_2 = 0$,
    while panel (b) uses $q_1 = 0$ and $q_2 = 1$.
 }
	\label{fig:nl_BEB_SN_diagram}
\end{figure}

\subsubsection{BEB-period-doubling bifurcations}
\label{subsec:beb-collides-with-pd}

\begin{Example} \label{itm:3D_PD_case}
    Inspired by forthcoming research into Shilnikov chaos arising from a codimension-three BEB \cite{Shil24},
    suppose the coefficients in \eqref{eq:codim2_NL_Nform} have the form
	$$
    A =
	\begin{bmatrix}
		\rho  & \omega & 0 \\ -\omega & \rho & 1 \\ 0 & 0 & -\lambda
	\end{bmatrix}, \quad
    A_1 = \mathbf{e}_3^\transpose \mathbf{e}_3, \quad
	M = \mathbf{e}_3, \quad
    C = \mathbf{e}_1, \quad
    B = [0, 1+r , -\sigma ]^\transpose.
	$$
    where $\sigma$ is a new parameter unrelated to that in \cref{eq:hybridSystemScaledTruncated}.
    We fix $\epsilon = 0$ and
\begin{equation}
\rho = 0.1, \qquad \omega = 1, \qquad \lambda = 0.3,
\label{eq:rho_omega_lambda}
\end{equation}
and consider various values of $r$ and $\sigma$.
	\end{Example}

For this example
$$
C^{\transpose}AB = (1 + r)\omega, \quad
C^{\transpose}A^{-1}B = \frac{\omega(\sigma-\lambda(1+r)) }{\lambda(\omega^2 + \rho^2)}, \quad
C^{\transpose} A^{-1} M = \frac{-\omega}{\lambda (\omega^2 + \rho^2)}.
$$
With $\sigma = \sigma_0 = 0.8$
we found numerically that $P$ in the limit $\mu \to 0^+$
has a fixed point with stability multiplier $-1$ when $r = r_0 \approx 0.66691$.
Moreover, the assumptions of
\cref{th:hybridPD} hold
using $\eta = \sigma - \sigma_0$ for the second parameter.
Thus for $\mu > 0$ period-doubling bifurcations occur on a curve
$\sigma = \sigma_0 + g(\mu)$ with $g(0) = 0$.
We computed this curve numerically and it is shown in \Cref{fig:PD_codim2_curve_a}.
By \cref{eq:PD_gradient} the curve
emanates from the codimension-two point $(\mu,\sigma) = (0,\sigma_0)$
with slope $-\frac{a c + d}{b c + e}$.
For this example
$a \approx 2.6886$, $b \approx -2.0195$, $c \approx 1.0021$,
$d \approx 0.9756$, and $e \approx -5.2383$ (also $f \approx 0.6332$).
In \Cref{fig:PD_codim2_curve_a} we show
the line through $(\mu,\sigma) = (0,\sigma_0)$
with slope $-\frac{a c + d}{b c + e}$ as a dashed line,
and indeed this appears to match the slope of our numerically computed
period-doubling bifurcation curve.
($\mu =0, ~\eta =0$) 

The BEB and period-doubling bifurcation curves divide
the parameter plane locally into three regions.
For $\mu < 0$ the system has an unstable pseudo-equilibrium,
while for $\mu > 0$ it has an unstable regular equilibrium
and a single-impact limit cycle.
Below the period-doubling curve this limit cycle is stable,
whereas immediately above the period-doubling curve it is unstable
and there is a stable two-impact limit cycle.
Further above the period-doubling curve
stable multi-impact limit cycles exist
due to the occurrence of a period-doubling cascade, described below.


%
\begin{figure}[ht!]
	\centering
	\subfloat[]
		{
		\includegraphics[width = 0.45 \linewidth]{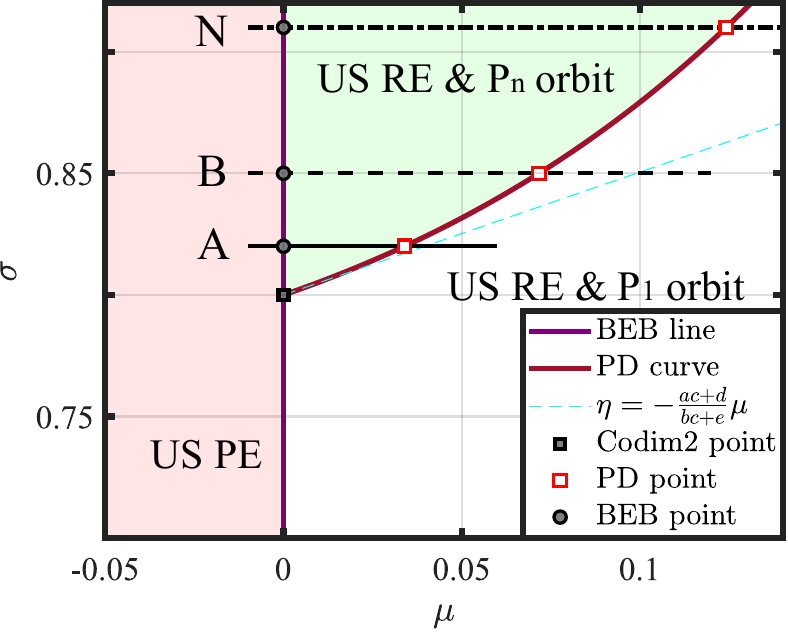}
		\label{fig:PD_codim2_curve_a}
	}
	\subfloat[]
	{
		\includegraphics[width = 0.45 \linewidth]{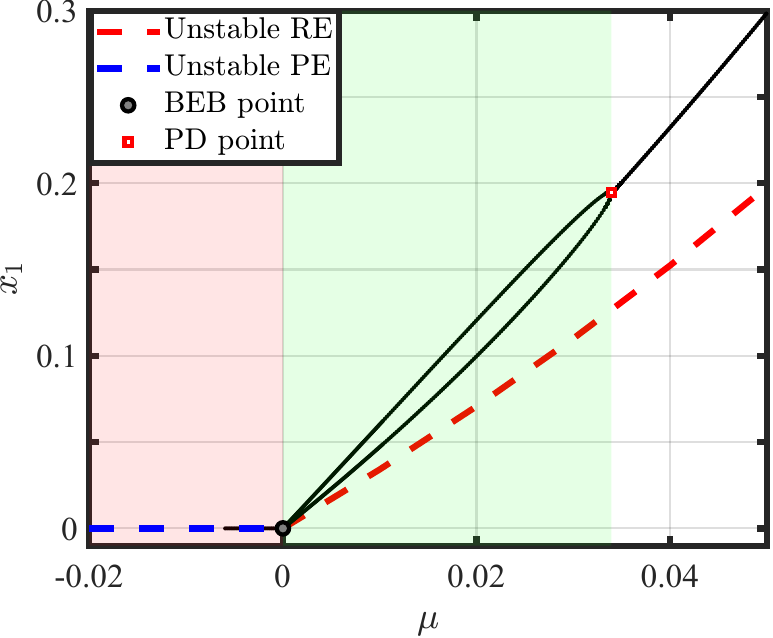}
		\label{fig:PD_codim2_curve_b}
	}
	\caption{(a) A two-parameter bifurcation diagram for \Cref{itm:3D_PD_case}
    with $r = r_0 \approx 0.66691$
    as discussed in the text.
    (b) A one-parameter bifurcation diagram
    using also $\sigma = 0.82$.
    This corresponds to the line $A$ shown in panel (a);
    slices along the lines $B$ and $N$ are shown in \Cref{fig:codim2_PD_chaos}.
    }
	\label{fig:codim2_PD_curve}
\end{figure}

Next we describe several bifurcation diagrams
defined by fixing $\sigma > \sigma_0$.
First \Cref{fig:PD_codim2_curve_b} uses $\sigma = 0.82$ (i.e.~$\eta = 0.02$).
This shows a stable two-impact limit cycle being created in the
BEB at $\mu = 0$, and coexisting with the regular equilibrium.
The limit cycle exists up until the period-doubling bifurcation
at $\mu_{\rm PD} = \eta^{-1}(\sigma - \sigma_0)\approx 0.033941$
beyond which there is a stable single-impact limit cycle.

\begin{figure}[ht!]
	\centering
	\subfloat{
		\includegraphics[width = 0.45 \linewidth]{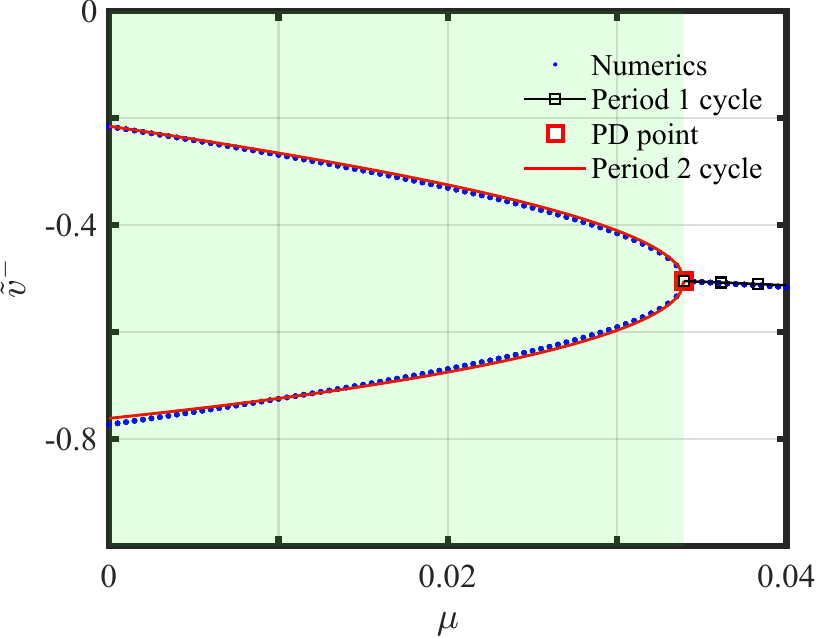}
		\label{fig:PD_codim2_unfold_a}
	}
	\subfloat{
		\includegraphics[width = 0.45 \linewidth]{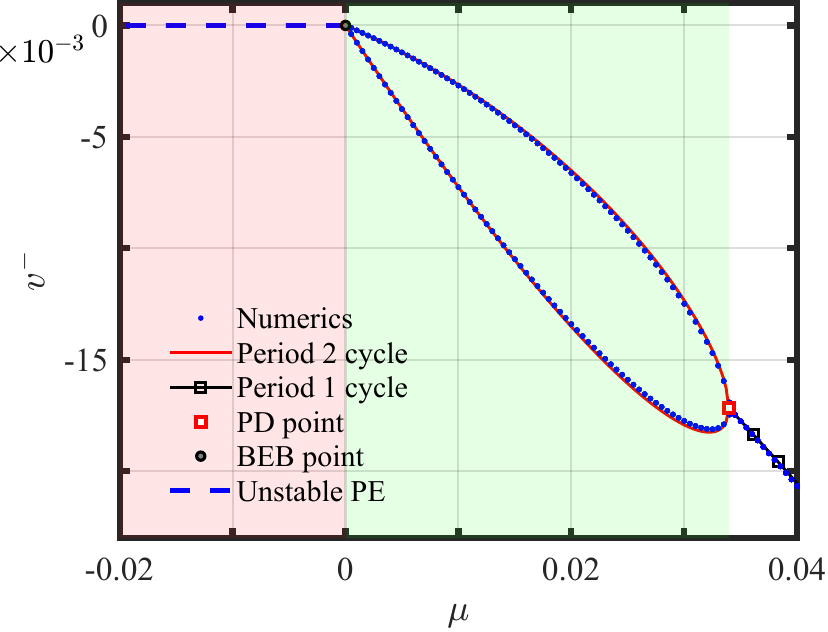}
		\label{fig:PD_codim2_unfold_b}
	}
	\caption{
    The impact velocities of the single-impact and two-impact
    limit cycles of \Cref{fig:PD_codim2_curve_b}
    in blown-up co-ordinates (panel (a)), and original co-ordinates (panel (b)).
    The blue points are the numerically obtained impact velocities,
    while the curves are the theoretical approximations \eqref{eq:sI}
    and \eqref{eq:dI}.
    Similar to \Cref{fig:sn_amplitude_approximation} but showing the agreement between theory and numerics for the BEB-PD case in \cref{itm:3D_PD_case}.}
	\label{fig:amplitude_approximation_PD}
\end{figure}
\begin{figure}[ht!]
	\centering
	\subfloat[]
	{
		\includegraphics[width = 0.44 \linewidth]{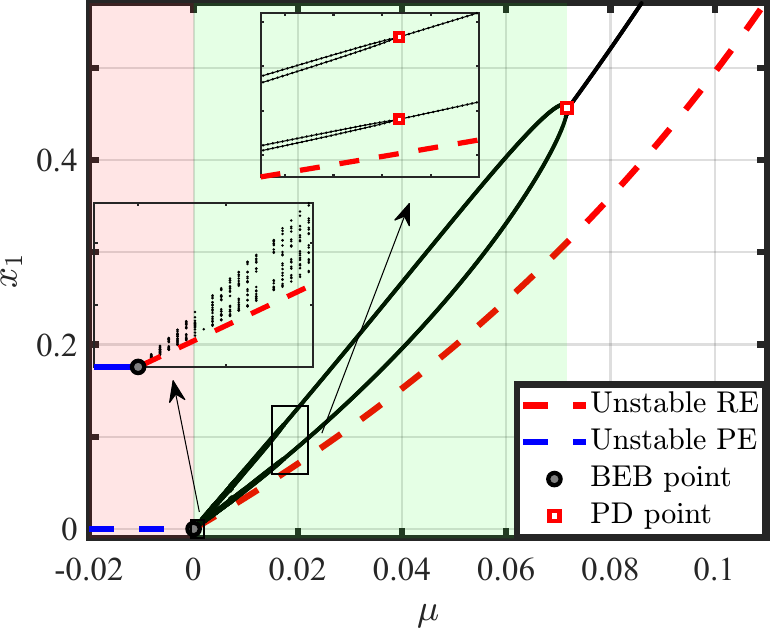}
		\label{fig:PD_codim2_unfold_P3_full}
	}
	\subfloat[]
	{
		\includegraphics[width = 0.45 \linewidth]{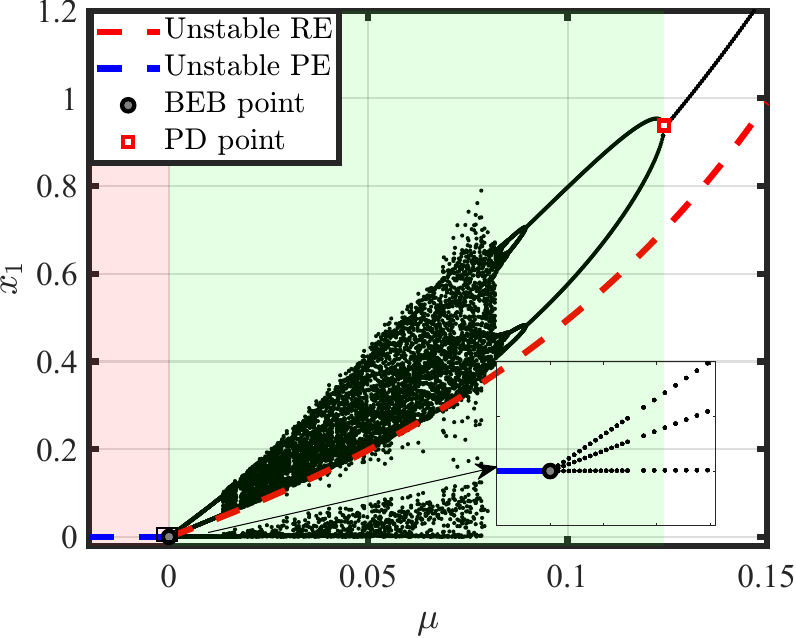}
		\label{fig:PD_codim2_unfold_PN_full}
	}
 \\
 \subfloat[]{
		\includegraphics[width = 0.44 \linewidth]{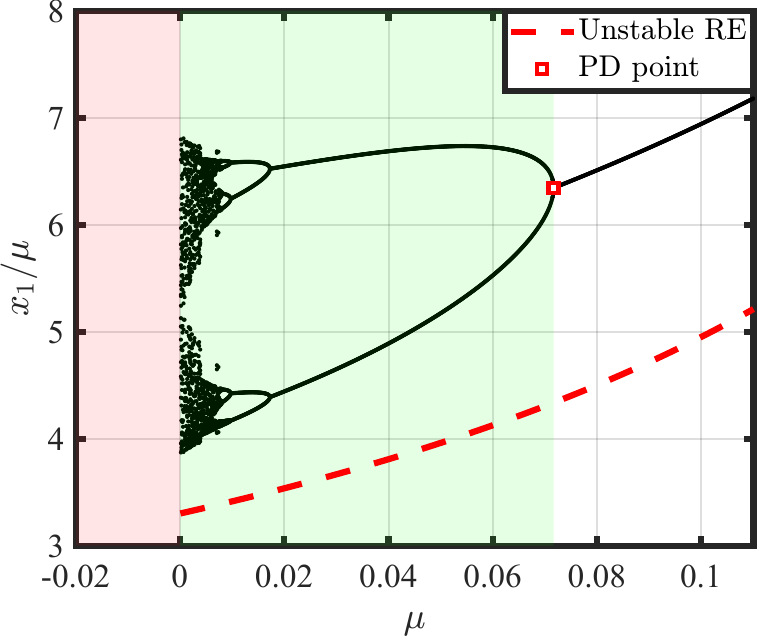}
		\label{fig:PD_codim2_unfold_P3_bu}
	}
	\subfloat[]{
		\includegraphics[width = 0.45\linewidth]{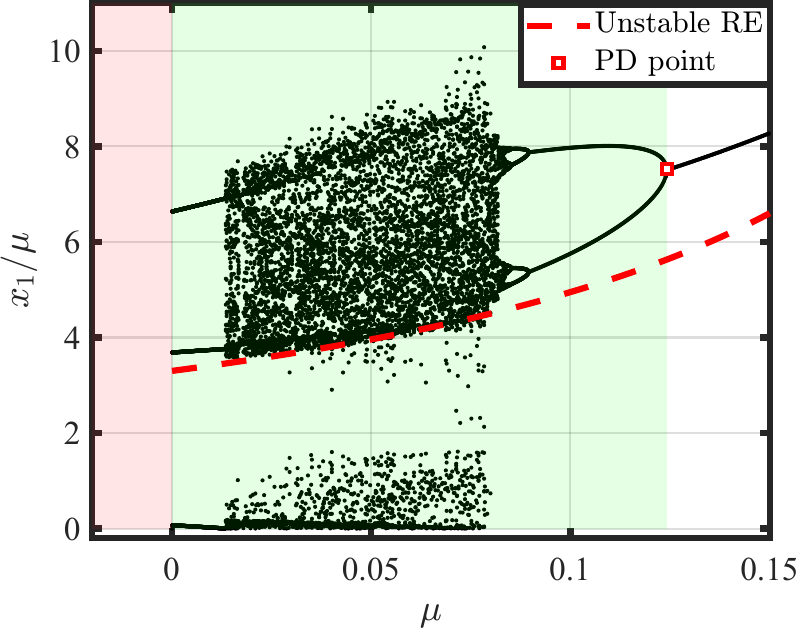}
		\label{fig:PD_codim2_unfold_PN_bu}
	}
    \caption{
    Bifurcation diagrams for \Cref{itm:3D_PD_case}
    with $r = r_0 \approx 0.66691$.
    Panels (a) and (c) use $\sigma = 0.85$ corresponding to slice B in
    \Cref{fig:PD_codim2_curve_a}, while
    panels (b) and (d) use $\sigma = 0.91$ corresponding to slice N.
    The top panels use original co-ordinates,
    while the bottom panels use blown-up co-ordinates.
    }
    \label{fig:codim2_PD_chaos}
\end{figure}
\begin{figure}[h!]
	\centering
	\subfloat[]{
		\includegraphics[width = 0.45 \linewidth]{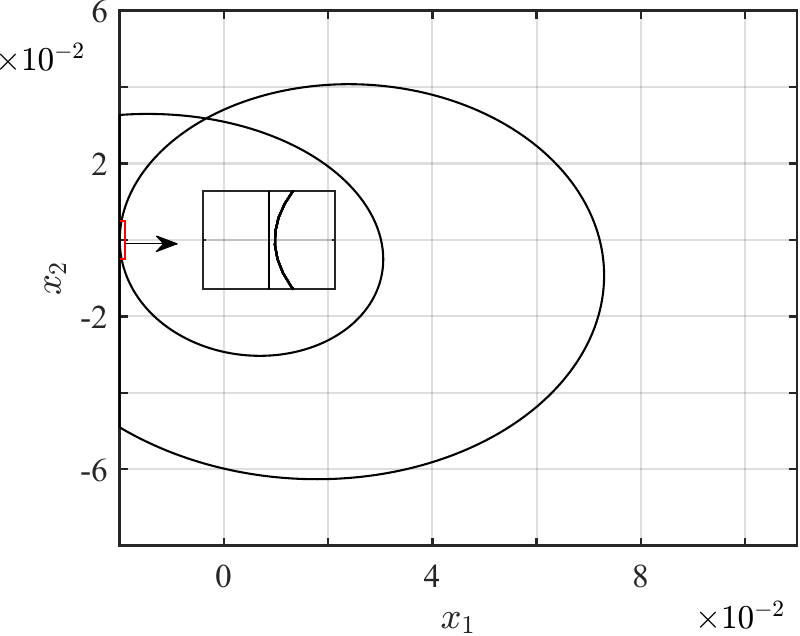}
	}
	\subfloat[]{
		\includegraphics[width = 0.45 \linewidth]{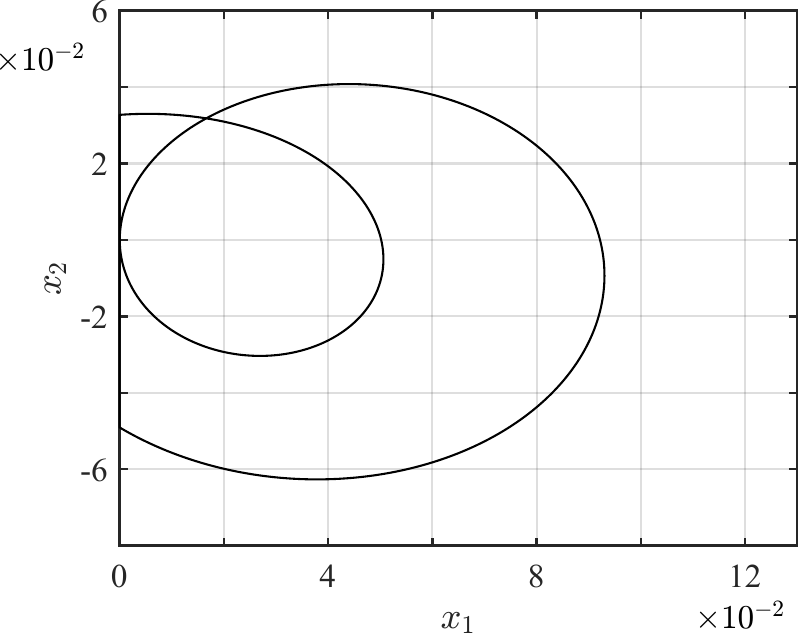}
	}
 \\
        \subfloat[]{
		\includegraphics[width = 0.45 \linewidth]{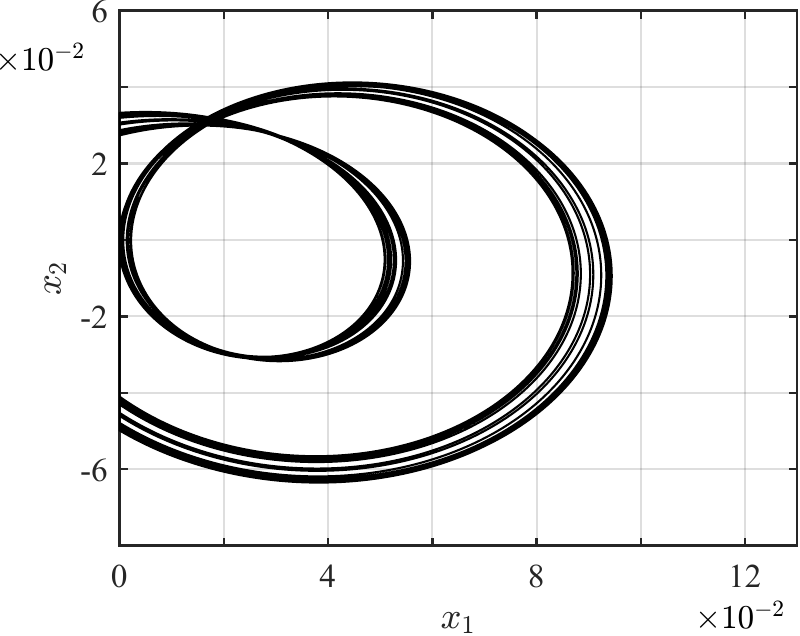}
	}
	\subfloat[]{
		\includegraphics[width = 0.45 \linewidth]{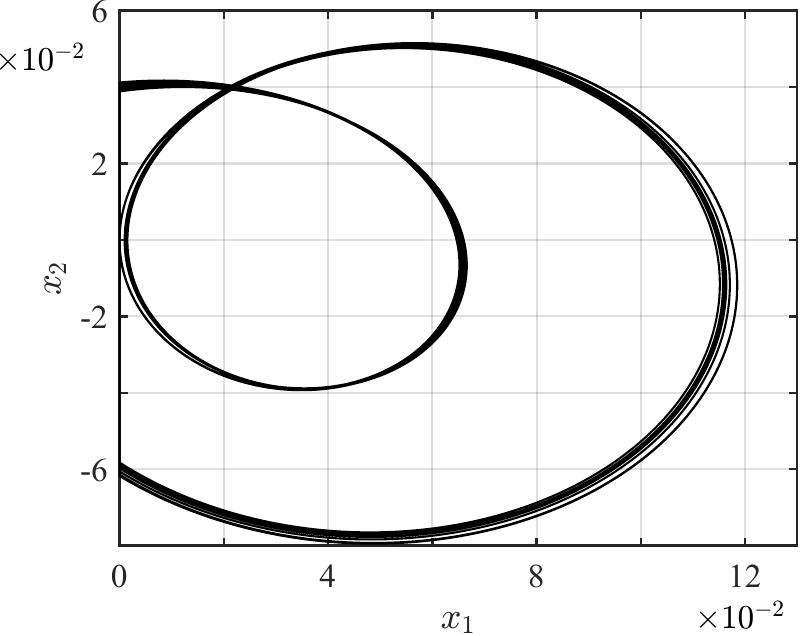}
	}
	\caption{
    Phase portraits showing the attractor of \Cref{itm:3D_PD_case}
    with $r = r_0 \approx 0.66691$, $\sigma = 0.91$,
    and four different values of $\mu$:
    (a) $\mu=0.01345$; (b) $\mu=0.0134597$; (c) $\mu=0.01355$; (d) $\mu=0.017$.
    As the value of $\mu$ is increased,
    a stable limit cycle undergoes a grazing bifurcation at $\mu \approx 0.0134597$
    and appears to be replaced by a chaotic attractor.
    }
\label{fig:slice_C_phase_portraits}
\end{figure}

As shown in \cref{sec:amp-approximation},
the impact velocity in blown-up co-ordinates
of the single-impact limit cycle is well approximated by
\begin{equation}
\hat{\mathcal{A}} = \ell_0 + \ell_1 \mu + L_{01} \frac{a(\mu - \mu_{\rm PD})}{2},
\label{eq:sI}
\end{equation}
where $\ell_0 \approx -0.5053$, $\ell_1 =0$, and $L_{01} \approx -0.9899$
for this example.
Furthermore
\begin{equation}
give\hat{\mathcal{A}} = \ell_0 + \ell_1 \mu + L_{01}\left(\pm \sqrt{ -\frac{(ca+d)(\mu - \mu_{\rm PD})}{c^2+f}} + \frac{c(ca+d)}{ 2(c^2 + f)}(\mu - \mu_{\rm PD}) + \frac{a(\mu - \mu_{\rm PD})}{2} \right)
\label{eq:dI}
\end{equation}
approximates the two impact velocities of the two-impact limit cycle.
As shown in \Cref{fig:PD_codim2_unfold_a}
these provide excellent agreement to the numerically computed values,
and \Cref{fig:PD_codim2_unfold_b}
shows these in the original unscaled co-ordinates.

Next, the left column of \Cref{fig:codim2_PD_chaos}
uses $\sigma = 0.85$, corresponding to slice B in \Cref{fig:PD_codim2_curve_a}.
Specifically \Cref{fig:PD_codim2_unfold_P3_full}
uses original co-ordinates and
\Cref{fig:PD_codim2_unfold_P3_bu} uses blown-up co-ordinates.
As we move right to left across either plot,
the single-impact limit cycle loses stability in a period-doubling bifurcation,
as in \Cref{fig:PD_codim2_curve_b},
but now this appears to be followed by a full period-doubling cascade
resulting in a chaotic attractor.
As we instead move left to right,
this attractor is born in the BEB at $\mu = 0$.
This suggests that as we increase the value of $\sigma$ from $0.82$ to $0.85$,
we pass through infinitely many period-doubling bifurcation curves.
We expect each of these emanates transversally
from the BEB curve at some $0.82 < \sigma < 0.85$,
and this could be verified formally by applying \cref{th:hybridPD}
to higher iterates of $P$.


Finally, the right column of \Cref{fig:codim2_PD_chaos}
uses $\sigma = 0.91$, corresponding to slice N in \Cref{fig:PD_codim2_curve_a}.
Now the BEB creates a stable period-two limit cycle.
At $\mu \approx 0.0134597$ the limit cycle appears
to be converted to a chaotic attractor
through a grazing bifurcation (i.e.~zero velocity impacts), see \Cref{fig:slice_C_phase_portraits}.
The chaotic attractor appears to extend until $\mu \approx 0.075$ where it undergoes a kind of boundary crisis caused by grazing behaviour of the chaotic attractor.
A different kind of chaotic attractor now occurs,
and this is subsequently destroyed in a reverse period-doubling cascade. 

\begin{Example}
    \label{itm:3D_PD_case_NL}
    Suppose the coefficients are as in 
    All the parameters are the same as in \cref{itm:3D_PD_case}, except 
    \qquad    $$ (a). \quad 
    q_{1} = -1, ~
    q_{2} = 0 ~
    ;
     \qquad
     (b). \quad
    q_{1} = 0, ~
    q_{2} = -1 ~
    ;
    $$
    and we allow various values of $\epsilon \ge 0$.
    
\end{Example}

We now consider $\epsilon \ne 0$ to investigate the effect of nonlinear terms.
We return to slice A of \Cref{fig:PD_codim2_curve_b}
where a stable two-impact limit cycle is created at the BEB,
and this occurs for any value of $\epsilon$.
Different values of $\epsilon$ affect the dynamics
and bifurcations occurring for $\mu > 0$, see \Cref{fig:nl_BEB_PD_diagram}.
With $\epsilon = 0.1$ the bifurcation structure is qualitatively unchanged
in that the two-impact limit cycle reverts to a single-impact limit cycle
in a (reverse) period-doubling bifurcation,
but for larger values of $\epsilon$ the attractor
appears to be chaotic in some places.

%
\begin{figure}[h!]
	\centering
	\subfloat{
		\includegraphics[width = 0.45 \linewidth]{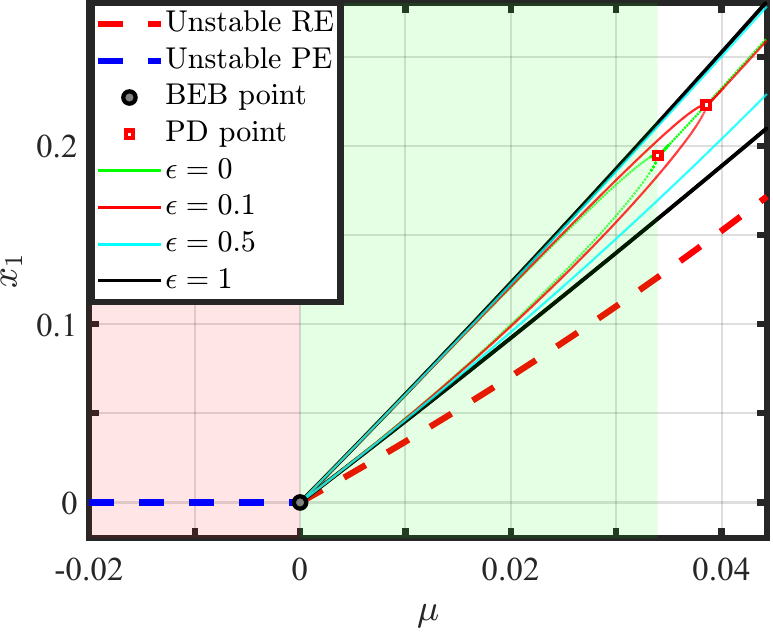}
		\label{fig:pd_codim2_unfold_a_nl}
	}
	\subfloat{
		\includegraphics[width = 0.45 \linewidth]{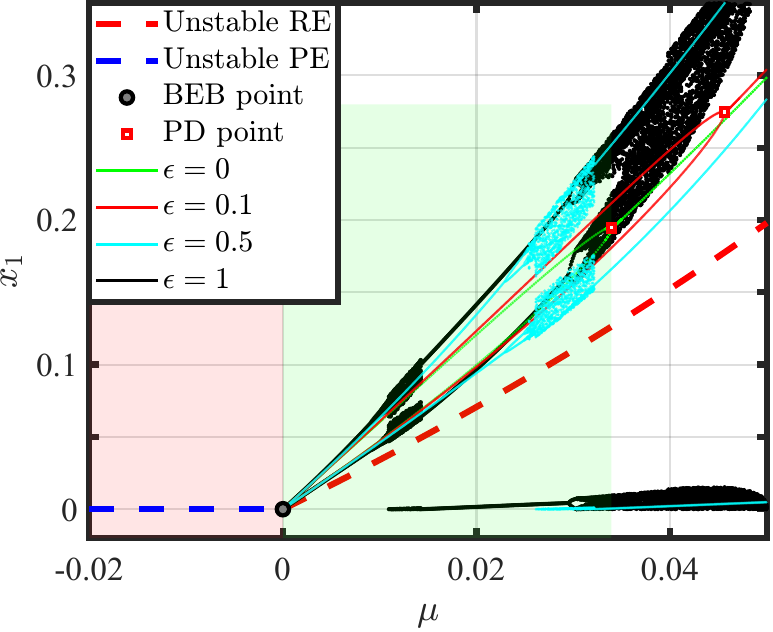}
		\label{fig:pd_codim2_unfold_b_nl}
	}
	\caption{One-parameter bifurcation diagrams for \cref{itm:3D_PD_case_NL}
    with $r = r_0 \approx 0.66691$, $\sigma = 0.82$,
    and various values of $\epsilon$.
    Panel (a) uses $q_1 = -1$ and $q_2 = 0$,
    while panel (b) uses $q_1 = 0$ and $q_2 = -1$.
 }
	\label{fig:nl_BEB_PD_diagram}
\end{figure}

\subsection{An eight-dimensional wing-flap model}
\label{sec:wingflaps}
Here we replicate numerical results from \cite{PeterAIAA} for a simplified airfoil model
illustrated in \Cref{fig: airfoil model}.
Due to rotary freeplay in the hinge between the flap and main body,
the system is well modelled as an impacting hybrid system,
where a reset map is applied when the flap hits the stop.
Using Lagrangian mechanics and approximated aerodynamics, see details in \cite{HoCh23}, the governing equations of motion can be written as
\begin{equation}
	\begin{cases}
		\dot{\mathbf{x}} = \mathbf{A}_{\rm af}(\bar{U}) \mathbf{x} + \mathbf{G}(\bar{U}), \quad \mbox{for} \quad
		\vert \beta \vert < \delta, \\
		\mathbf{x} = R (\mathbf{x}), \quad \mbox{for} ~{\tiny }\vert \beta \vert = \delta,
	\end{cases}
	\label{eq: airfoil model}
\end{equation}
where $\mathbf{x} = [\zeta, \alpha, \beta, \dot{\zeta}, \dot{\alpha}, \dot{\beta}, w_1, w_2]^\transpose$.
Here $\alpha$ is the rotary pitch,
$\beta$ is the flap position,
$\zeta$ is the dimensionless heave,
and $w_1$ and $w_2$ are augmented variables that capture
Theodorsen aerodynamic interactions \cite{THEODORSEN1935}.
The parameter $\bar{U}$ is the dimensionless air velocity over linear flutter velocity $U_{\rm f} = 30 ~{\rm m/s}$,
and $\delta$ characterizes the amount of flap freeplay.
The matrix $\mathbf{A}_{\rm af}$ specifies the dynamics of the airfoil when the flap is in freeplay.
The reset map $R(\mathbf{x})$ is
affine and models impacts with a coefficient of restitution $0<r<1$
in the sense that $\dot{\beta} \mapsto -r \dot{\beta}$.
Full details of the model including the coefficients of
$\mathbf{A}_{\rm af}$ and $\mathbf{G}$ are given in \Cref{appendix: airfoil model 3dof}.

\begin{figure}[h!]
	\centering
\includegraphics[width=0.5\linewidth]{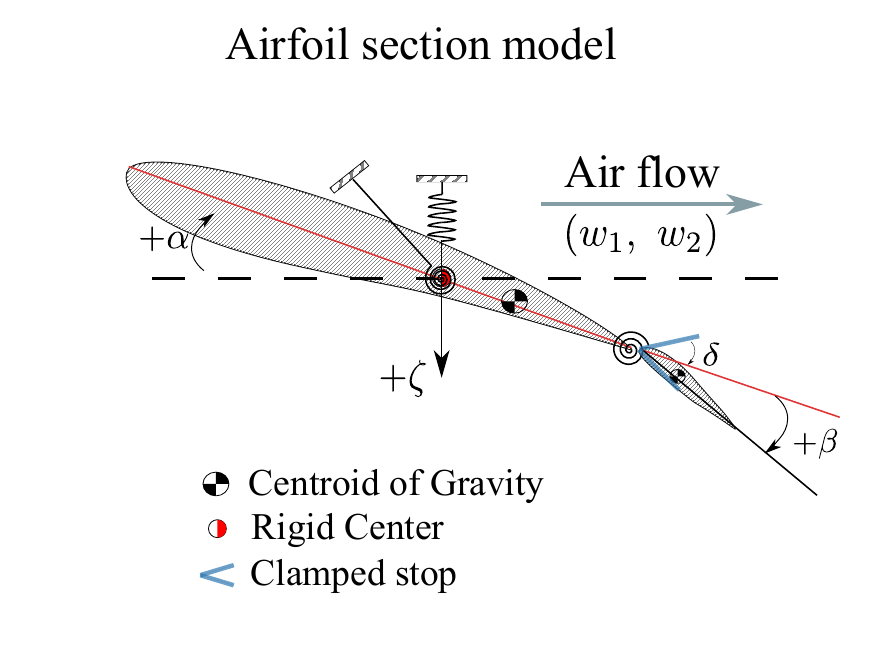}
	\caption{A sketch illustrating the variables of the airfoil model \eqref{eq: airfoil model}.}
	\label{fig: airfoil model}
\end{figure}

\begin{figure}[h!]
	\centering
	\subfloat[]{
		\includegraphics[width=0.45 \linewidth]{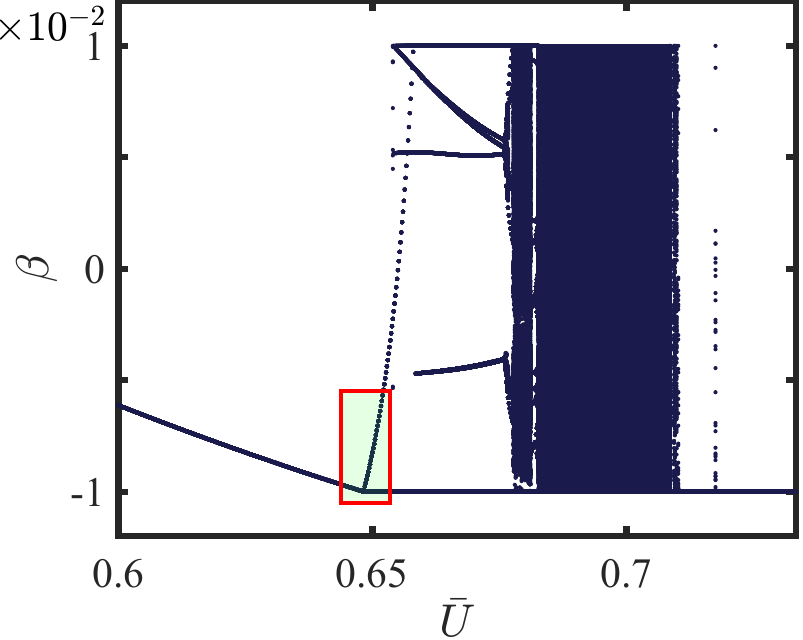}
		\label{fig: full diagram}
	}
	\subfloat[]{
		\includegraphics[width=0.45 \linewidth]{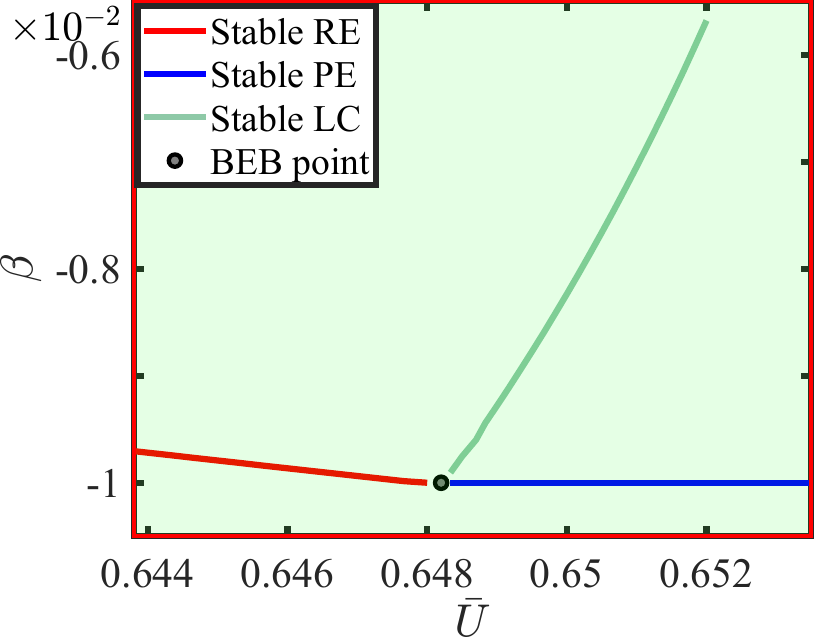}
		\label{fig: zoomin of BEB}
	}
	\caption{Brute force bifurcation diagram of the airfoil model \cref{eq: airfoil model}; (a) the full bifurcation
	diagram capturing various dynamics, (b) zoomed-in part of the first bifurcation from boxed region in (a):
	\emph{PE} -- Pseudo Equilibria, \emph{RE} -- Regular Equilibria. Full equations and parameter definitions are
	given in \cref{appendix: airfoil model 3dof}.}
	\label{fig: BEB diagram}
\end{figure}

\Cref{fig: BEB diagram}, taken from \cite{HoCh23}, shows a typical bifurcation diagram of the airfoil model.
We use the flow velocity $\bar{U}$ as the bifurcation parameter,
while the remaining parameters are fixed at
\begin{equation}
\delta = 0.01, \qquad
r = 0.72, \qquad
\xi_h = \xi_{\alpha} = \xi_{\beta} = 0.02.
\end{equation}
Specifically, we see a stable regular equilibrium collides with the impacting surface $\beta = -\delta$
at $\bar{U} \approx 0.64833$, beyond which the system
has a stable pseudo-equilibrium coexisting with a stable single-impact limit cycle.
The limit cycle subsequently undergoes smooth bifurcations near the impacting surface,
and this is what motivates our studies of codimension-two BEBs.

\subsubsection{BEB-saddle-node bifurcation}\label{subsec:sn-in-airfoil-model}
Using the algorithm and continuation techniques of \cite{HoCh23},
we tracked a curve of saddle-node bifurcations of single-impact limit cycles
in the $(\bar{U},r)$ parameter plane, \Cref{fig:cosim2_BEB_SN}.
We found this curve meets the BEB at the codimension-two point
$\left( \bar{U}_0, r_0 \right) \approx (
0.64833, 0.6292)$.
Here the normal form \eqref{eq:1dSN} has coefficients
\[
a \approx 5.1754, \qquad b \approx 44.1124, 
\]
and in agreement with \eqref{eq:SN_gradient}
the saddle-node bifurcation curve appears to have slope $-\frac{a}{b}$
at the codimension-two point.

\Cref{fig:cosim2_BEB_SNb} shows a typical one-parameter bifurcation diagram
containing both the BEB and saddle-node bifurcations.
At the BEB the stable state changes from a regular equilibrium to a pseudo-equilibrium,
while at the saddle-node bifurcation stable and unstable limit cycles are born
(only the stable limit cycle is indicated in the figure as it was computed
numerically via direct integration of \eqref{eq: airfoil model}).
Note that since the one-parameter bifurcation diagram corresponds to a slice below
the codimension-two point where no limit cycles are created in the BEB,
bifurcation diagram shows a different sequence of dynamics to \Cref{fig:sn_codim2_curve_b}.

\begin{figure}[h!]
\centering
\subfloat
{ \label{fig:cosim2_BEB_SN}
\includegraphics[width=0.45 \linewidth]{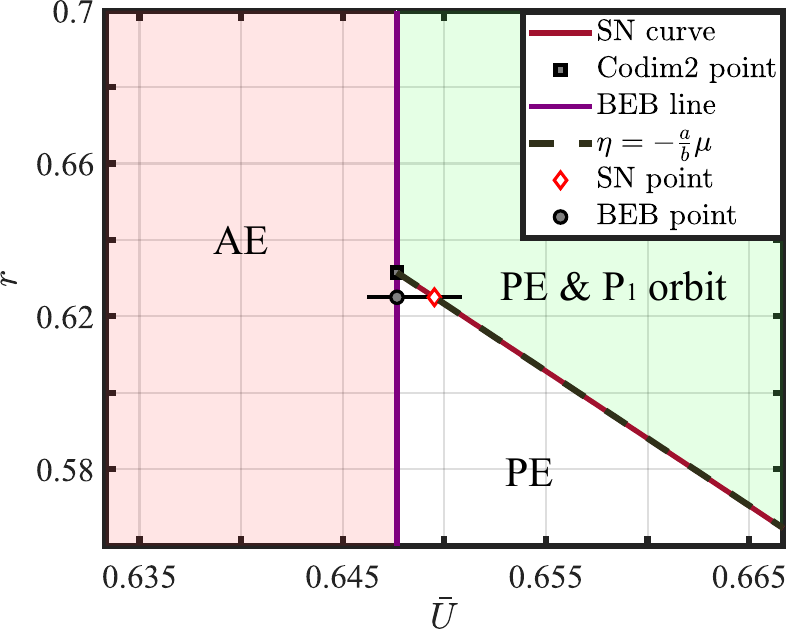}
}
\subfloat
{ \label{fig:cosim2_BEB_SNb}
\includegraphics[width=0.48 \linewidth]{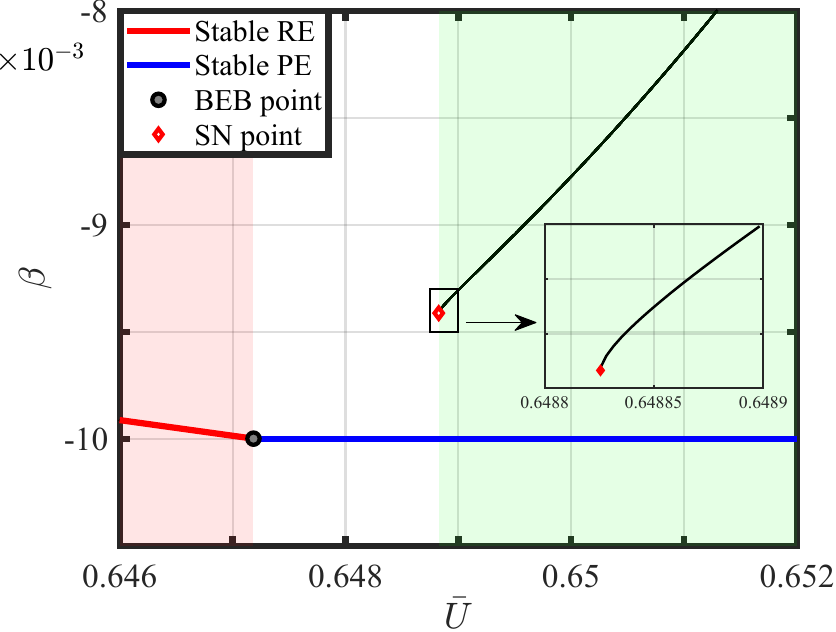}
}
\caption{(a) A two-parameter bifurcation diagram showing the unfolding
of a BEB-saddle-node bifurcation in the airfoil model
\eqref{eq: airfoil model} with $\delta = 0.01$
and $\xi_h = \xi_{\alpha} = \xi_{\beta} = 0.02$.
(b) A one-parameter bifurcation diagram corresponding to setting also $r = 0.625$.
}
\label{fig:unfold_SN_BEB_Airfoil}
\end{figure}

\subsubsection{BEB-period-doubling bifurcation}\label{subsec:pd-in-airfoil-model}
\begin{figure}[h!]
\centering
	\subfloat{
        \label{fig:cosim2_BEB_PD}
		\includegraphics[width=0.48 \linewidth]{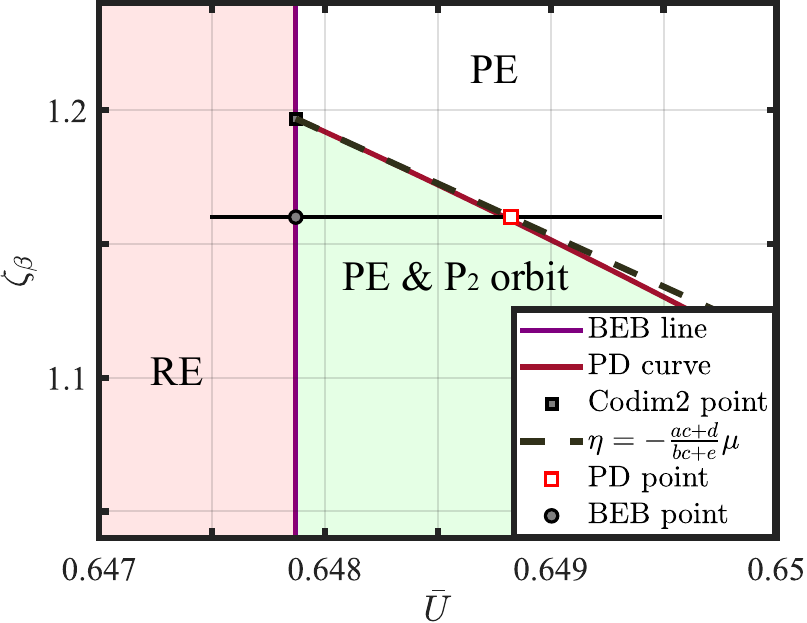}
	}
 \hspace{-1.em}
	\subfloat{
 \label{fig:BEB_PD_diaram}
 \includegraphics[width=0.47 \linewidth]{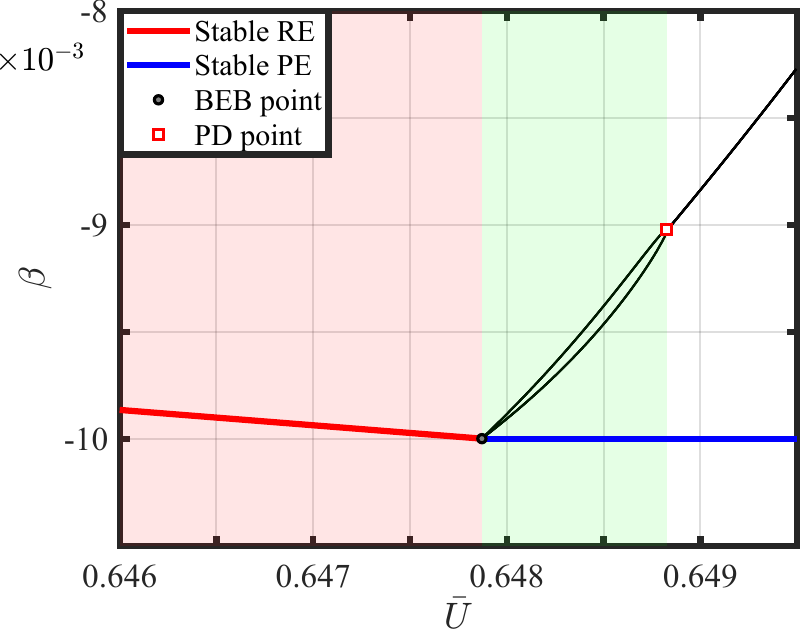}
 }
	\caption{
    (a) A two-parameter bifurcation diagram showing the unfolding
of a BEB-period-doubling bifurcation of the airfoil model
\eqref{eq: airfoil model} with $\delta = 0.01$, $r = 0.72$, and $\xi_h = \xi_{\alpha} = 0.02$.
(b) A one-parameter bifurcation diagram corresponding to setting also $\xi_\beta = 0.0116$.
	}
	\label{fig:unfold_PD_BEB_Airfoil}
\end{figure}

By now fixing $r = 0.72$ and varying $\bar{U}$ and $\xi_\beta$,
we found that a curve of period-doubling bifurcations of single-impact limit cycles
meets the BEB at $\left( \bar{U}_0, \xi_\beta^0 \right) \approx \left( 0.64833, 0.011793 \right)$,
see \Cref{fig:cosim2_BEB_PD}.
Here the normal form \eqref{eq:1dPD} has coefficients
\[
a \approx 16.0757,\quad b \approx -3.5337, \quad c \approx 0.0046, \quad d \approx 0.0595, \quad e \approx 0.2627,
\]
and as expected, the period-doubling curve appears to have
slope $-\frac{a c + d}{b c + e}$ at the codimension-two point.

\Cref{fig:BEB_PD_diaram} shows a nearby one-parameter bifurcation diagram.
The BEB creates a stable two-impact limit cycle
that reverts to a single-impact limit cycle in the period-doubling bifurcation.


\section{Discussion}
\label{sec:conc}

In this paper two codimension-two boundary equilibrium  bifurcations (BEBs) have been analysed for the first time. These are situations where the number and nature of the small-amplitude limit cycles born at BEBs vary due to saddle-node or period-doubling bifurcations of limit cycles.
Analogous situations involving a Neimark-Sacker bifurcation are left for future work.

Future work should also consider global bifurcations
of the small-amplitude limit cycles that emerge from a BEB; see for example the case of a 
homoclinic boundary focus identified in \cite{DeDe11},
and forthcoming work \cite{Shil24} featuring the local birth of Shilnikov-grazing dynamics. 

Clearly there are many more behaviours that can be born locally from a BEB, including, as illustrated in \cref{fig:codim2_PD_chaos}, two different kinds of chaotic attractors. A detailed study of the local creation of chaotic attractors is beyond the scope of the present study, and it would be particularly interesting to
see how these arise beyond a cascade of codimension-two BEB-period-doubling bifurcations.
We expect that the codimension-two BEBs
admit the same types of unfoldings for
Filippov systems and piecewise-smooth continuous ODEs
because in each case the blown-up system is piecewise-linear in the limit $\mu \to 0^+$.

    \section*{Acknowledgments}
Hong Tang was supported by the University of Bristol \& China Scholarship Council joint studentship, ${\rm No
.}202006120007$. David Simpson was supported by Marsden Fund contract MAU2209 managed by Royal Society Te Ap\={a}rangi. The authors thank Mike Jeffrey  for helpful conversations. 
	\printbibliography
    \clearpage
        \appendix
        \section{Numerical computation of the restricted map}
\label{sec:numerics-the-return-map}

\subsection{Parameter-dependent fixed point problem}
\label{sec:para-dependent-map}
Let us start by extending the results in \cite{Yuri98} 
to consider the parameter dependence of the normal form of saddle-node and period-doubling bifurcations more explicitly. Consider a map 
\[
 x \mapsto P(x; \alpha)
\]
where $x \in \mathbb{R}^n$ and $\alpha \in \mathbb{R}$. We will derive formulas for the first few terms of the restriction of $P$
to a centre manifold associated with a saddle-node or period-doubling
bifurcation.


Suppose $P(0;0) = 0$, so that the origin $x=0$ is a fixed point
of $P$ when $\alpha = 0$, and write
\begin{equation}
P(x; \alpha) = \alpha \gamma + A(\alpha) x  + F(x;\alpha),
\end{equation}
where $\gamma = \frac{\partial P}{\partial \alpha}$ is an $n$-dimensional 
constant vector, 
$A(\alpha) \in \mathbb{R}^{n \times n}$, and
$F$ contains all other terms.

At $\alpha =0$, let us suppose that there is a unique multiplier $\lambda_0$ of $A(0)$ such that $|\lambda_0|=1$ ($\lambda_0=1$ for the saddle node case and $\lambda_0=-1$ for period doubling). Let $\lambda(\alpha)$ be a continuous family of such multipliers, and choose 
$w(\alpha)$ and $v(\alpha)$ to be smooth families of the corresponding left and right eigenvectors, respectively, normalized so that $w^\transpose v = 1$ for all small $\alpha$. Henceforth we shall use a subscript $0$ for any variable calculated
at $\alpha=0$ and otherwise drop dependence on $\alpha$ whenever the meaning is clear. 
Thus, $\lambda = w^\transpose A v$ for all small $\alpha$.

Then we make the $\alpha$-dependent projection of $x$ into components in the kernel
and range of $(A-\lambda I)$ via 
$$
x = z v + y, 
\qquad  \mbox{where } z = w^\transpose x, \quad y = x - w^\transpose x,
$$
and write the map $P$ in the new co-ordinates as coordinates 
\begin{align}
\label{eq:map-in-new-coordinate}
	\begin{split}
		z & \mapsto \tilde{z} \coloneq \nu \alpha  + \lambda z +  w^\transpose F(z v + y; \alpha) \\
		y  & \mapsto \tilde{y} \coloneq \kappa \alpha + A y  + F(z v + y;\alpha) - w^\transpose F(z v + y; \alpha) v
	\end{split}
\end{align} 
where $ \nu = w^\transpose \gamma$ is a scalar and  
$\kappa = \gamma - w^\transpose \gamma v \in \mathbb{R}^n$. 

We now follow the projection method of Kuznetsov \cite[pg,~174]{Yuri98} to obtain the normal form. We start by writing the nonlinear part using 
the multivariable Taylor expansion, 
\[
F = \frac{1}{2} B(x,x) + \frac{1}{6} C(x,x,x) + \text{\sc h.o.t.,}
\]
where $B$ is a third-order and $C$ a fourth-order tensor (these are unrelated to the matrices $B$ and $C$ used in the main body of the paper). Then we
we can write  \cref{eq:map-in-new-coordinate} in the form
\begin{align}
\label{eq:expanded_new_map_new_coordinate}
	\begin{split}
		\tilde{z} &= \nu \alpha  + \lambda z +  cz^2 + w^\transpose B(v,y) z  + r z^3 + \text{\sc h.o.t.} \\
		\tilde{y} &= \kappa \alpha + A y  + \frac{1}{2} H_{02} z^2 +  B(v ,  y) z - z w^\transpose  B(v,y) v 
	 + \frac{1}{2}  B(y,y) - \frac{1}{2}  w^\transpose B(y,y) v + \text{\sc h.o.t.},
	\end{split}
\end{align} 
where 
$$
c = \frac{1}{2} w^ \transpose B(v,v), \quad r = \frac{1}{6}w^ \transpose C(v,v,v), \quad  H_{02} = B(v,v) - w^\transpose B(v,v) v.
$$

Next, let us suppose we write the center manifold of  \cref{eq:expanded_new_map_new_coordinate} as
\begin{equation} 
	y = V(z, \alpha) \coloneq w_{01} \alpha + w_{11} \alpha z +\frac{1}{2} w_{20} z^2 + \frac{1}{2}w_{02} \alpha^2 + \text{\sc h.o.t.},
\label{eq:CM}
\end{equation}
where the $w_{ij}$ are vectors that we need to calculate. We then 
substitute \cref{eq:CM} into \cref{eq:expanded_new_map_new_coordinate}, in order to write  the restricted map as 
\begin{equation} \label{eq:restricted_map_1D}
		\tilde{z}  = \nu\alpha + \lambda(\alpha) z +c z^2 +   w^\transpose B(v ,  w_{01})  \alpha z  + (\frac{1}{2}w^\transpose B(v ,  w_{20})  + r) z^3  + \text{\sc h.o.t.}.
\end{equation}
Similarly, for the $y$ component we obtain
 \begin{align} \label{eq:restricted-y-map}
 	\begin{split}
 		\tilde{y} &= \kappa \alpha  + A w_{01} \alpha + A w_{11} \alpha z + \frac{1}{2} A w_{20} z^2 +\frac{1}{2} H_{02} z^2  \\ 
 		&+ ( B(v ,  w_{01}) - w^\transpose B(v ,  w_{01}) v) \alpha z +   \frac{1}{2}   B(y,y)   - \frac{1}{2}  w^\transpose  B(y,y) v +  \text{\sc h.o.t}
 	\end{split}
 \end{align}

In order to obtain the unknown coefficient vectors $w_{01}, w_{11}, w_{20}$, we have the to solve the homological equation 
\begin{equation}
\tilde{y} = V(\tilde{z}, \tilde \alpha)
\label{eq:homolog}
\end{equation}
After, substituting the expression \cref{eq:restricted_map_1D} for $\tilde{z}$ into $V$ defined by \cref{eq:CM}, 
we find the first few terms of the right-hand side of \cref{eq:homolog} are
\begin{equation*}
V(\tilde{z}, \alpha)  = w_{01} \alpha  
 + w_{11} \alpha (\nu \alpha + \lambda(\alpha) z) 
+\frac{1}{2} w_{20} (\nu \alpha + \lambda(\alpha) z + \cdots)^2  
+ \text{\sc h.o.t.}, 
\end{equation*}
We now solve \cref{eq:homolog} term by term, by matching coefficients of 
$\alpha$,  $\alpha z$ and $z^2$. From this we obtain 
\begin{align}
	\kappa + A w_{01} &= w_{01} \\
	A w_{11} + B(v ,  w_{01}) - w^\transpose B(v ,  w_{01}) v & =  w_{11} \lambda_0  + w_{20} \nu \lambda_0\\
	A w_{20} + H_{02} &  = w_{20}.
\end{align}
Looking at the first equation, note that in the case of the saddle-node, 
the definition of $\kappa$ ensures that it is in the range of 
$(A-I)$, therefore we can find a solution up to a scalar constant. 
The same is true for all the $w_{ij}$, for both the saddle-node and period-doubling cases. In particular, we find 
\begin{description}
    \item[SN case:]  $\lambda_0 = 1$.
        \begin{align*} 
        w_{01} & = (I - A)^{{\rm INV}}(\gamma - w^\transpose \gamma v), \\
        w_{11} & =  (I - A)^{{\rm INV}} \left[ B(v ,  w_{01}) - w^\transpose B(v ,  w_{01}) v  - w_{20} \nu \right], \\
        w_{20} & = (I - A)^{{\rm INV}}H_{02};
        \end{align*}
    \item[PD case:] $\lambda_0 = -1$.
    \begin{align*}
    w_{01} & = (I - A)^{-1}(\gamma - w^\transpose \gamma v)  = (I - A)^{-1} \gamma  - \frac{w^\transpose \gamma}{2} w, \\
    w_{11} & =  -( I + A)^{{\rm INV}} \left[ B(v ,  w_{01}) - w^\transpose B(v ,  w_{01}) v + w_{20} \nu  \right], \\
    w_{20} & = ( I - A)^{-1} H_{02}.
    \end{align*}
\end{description}
Here ${\rm INV}$ represents a pseudo-inverse computation, which is needed when the matrix is not invertible, but the right-hand side is in the range of the operator. In that case, the solution is only defined up to a scalar (see \cite[pg,~184]{Yuri98} for details). 

It is helpful in what follows to notice that 
$$ 
\frac{\partial^2 \tilde{z} }{ \partial \alpha \partial z} = w^\transpose{\rm D} \frac{\partial P}{\partial \alpha}v + w^\transpose B(v,w_{01}).
$$

\subsection{Evaluation of coefficients of restricted maps}

We now show how the above results enable us to find closed form expressions for all the coefficients in \eqref{eq:1dSN} and \eqref{eq:1dPD} in terms of
the (full) Poincar\'e map $P(u;\mu,\eta)$ and its derivatives. In particular, following the logic of the previous section, with $\alpha$ replaced by 
either $\mu$ or $\eta$, we find 
%
%
the coefficients of \eqref{eq:1dSN} and \eqref{eq:1dPD}
are given by
\[
a = w^\transpose \frac{\partial P}{\partial \mu}, \quad
b = w^\transpose \frac{\partial P}{\partial \eta}, \quad 
c= \frac{1}{2}w^\transpose \big[ (\rD^2 P)(v,v) \big];
\]
with
\[
\quad 
d = w^\transpose D \frac{\partial P}{\partial \mu} v + w^\transpose B\big(v ,  (I - A)^{{\rm INV}}(\frac{\partial P}{\partial \mu} - a v) \big)  , \quad
e = w^\transpose D \frac{\partial P}{\partial \eta} v + w^\transpose B\big(v ,  (I - A)^{{\rm INV}}(\frac{\partial P}{\partial \eta} - b v) \big) ,
\]
\[m = \frac{1}{2}w^\transpose \frac{\partial^2 P}{\partial \mu^2}, \quad
n = \frac{1}{2} w^\transpose \frac{\partial^2 P}{\partial \eta^2}, \quad
f =  \frac{1}{6} w^\transpose C(v,v,v)  + \frac{1}{2}  w^\transpose B(w, B(w, (I -A )^{\rm INV} H_{02}) ;
\]
in the saddle-node case, and
\[
\quad 
d = w^\transpose D \frac{\partial P}{\partial \mu} v + w^\transpose B\big(v ,  (I - A)^{-1}(\frac{\partial P}{\partial \mu} - a v) \big)  , \quad
e = w^\transpose D \frac{\partial P}{\partial \eta} v + w^\transpose B\big(v ,  (I - A)^{-1}(\frac{\partial P}{\partial \eta} - b v) \big) ,
\]
\[
f =  \frac{1}{6} w^\transpose C(v,v,v) - c^2 + \frac{1}{2}  w^\transpose B(w, B(w, (I -A )^{-1} B(v,v)) .
\]
in the period-doubling case.

\section{Approximation of the bifurcating limit cycles}
\label{sec:amp-approximation}

Here, we provide approximate expressions for the amplitudes of the bifurcating limit cycles.
We use the incoming (impact) velocities
of the limit cycles to measure their amplitude.
These are obtained by evaluating the velocity
relative to the impacting surface at the corresponding
fixed points $\hat{u}$ of the Poincar\'e map.

Given the expansion of the return map for the scaled system \eqref{eq:hybridSystemScaled}, the locus of fixed points can be parameterized as a curve $ u = h(s; \mu, \eta)$, as in \eqref{eq:1dSN} and \eqref{eq:1dPD}.  Let $\hat{s} \in \mathbb{R}$ be such that $\hat{u} = h(\hat{s};0,0)$, and $z = s - \hat{s}$ evaluated from $z = q(z; \mu, \eta)$. Importantly, the right eigenvector $v$ is tangent to the curve $h$ at $\hat{u} = h(\hat{s};0,0)$.

In blown-up coordinates, the limit cycle amplitude is
$\hat{\mathcal{A}} = \tilde{v}^- = \nabla H(y)^\transpose F(y)$,
and
\begin{equation}
\label{eq:amp_definition}
\hat{\mathcal{A}} = C^{\transpose}Ay + C^{\transpose}M +  \mu C^{\transpose}A_1 y + \mu C^{\transpose}M_1  +o(\mu),
\end{equation}
according to \cref{eq:hybridSystemScaled}.
The amplitude in the coordinates of the original system
is then simply given by $\mathcal{A} = \hat{\mathcal{A}} \mu$.

For small $\mu > 0$ the fixed point is near $\hat{u}$,
so we write $u_p(\mu) = \hat{u}+\delta u$. We have
$y = \zeta^{-1}(u_p) = \begin{bmatrix}
    0\\ u_p
\end{bmatrix}$, so
\begin{equation}
    \delta u = z v + \text{\sc h.o.t.}
\end{equation}
Then from \eqref{eq:amp_definition} we have
\begin{equation}
\label{eq:BmathcalA}
    \hat{\mathcal{A}} = \ell_0 + \mu \ell_1 + L_{01} z  + L_{11} \mu z +\text{\sc h.o.t.}
\end{equation}
where 
\begin{align*}
     & \ell_0 = C^\transpose (A \zeta^{-1}(\hat{u}) + M), \quad \ell_1 = C^\transpose (A_1 \zeta^{-1}(\hat{u}) + M_1), \quad L_{01} = C^\transpose A \zeta^{-1}(v),  \quad
  L_{11} = C^\transpose A_1 \zeta^{-1}(v).
\end{align*}


To prove \cref{th:hybridSN,th:hybridPD}
for the codimension-two saddle-node and period-doubling scenarios,
we used Taylor expansions centred at $(\mu,\eta) = (0,0)$.
Now consider a nearby point $(\mu,\eta) = (\mu_c,\eta_c)$
that belongs to the bifurcation curve of saddle-node
or period-doubling bifurcations,
and compute the limit cycle amplitude
as a function of $\mu$ with $\eta = \eta_c$ fixed.

\subsection{The saddle-node case} 
    \label{sec:amp-approx-SN}

    To approximate the amplitude we truncate
    \eqref{eq:1dSN} to
    \begin{equation}\label{eq:BEB-SN-truncated-map}
         q(z;\mu,\eta) = z + a \mu + b \eta + c z^2 + d \mu z + e \eta z + m \mu^2 + n \eta^2.
    \end{equation}
    The fixed point equation for \eqref{eq:BEB-SN-truncated-map}
    is quadratic; by the quadratic formula the fixed points are
\begin{align}
z  = \mathcal{Y}_{1,2} -\frac{d \mu + e \eta }{ 2c },
\label{eq:SN_amp_approx_Scaled}
\end{align}
where
  \[
   \mathcal{Y}_{1,2} = \pm \sqrt{ \frac{(d\mu + e \eta)^2}{4c^2}   - \frac{a\mu  + b\eta}{c} 
   - \frac{m \mu^2 + n \eta^2}{c}}.
  \]
Then by \eqref{eq:BmathcalA}
if the blown-up system has two limit cycles
for $0 < \mu < \mu_c$, these have amplitudes
\begin{equation}
    \hat{\mathcal{A}} = \ell_0 + \mu \ell_1 +  L_{01} \left( \pm \sqrt{-\frac{a}{c} (\mu - \mu_c) + \frac{(d^2 - 4mc)(\mu - \mu_c)^2}{4c^2} } -\frac{d (\mu - \mu_c)}{2c} \right),
\end{equation}
where $\ell_0 + \ell_1 \mu_c$ is the amplitude of the fixed point $\hat{u}$ at $(\mu_c, \eta_c)$,
and $L_{01}$ captures the
variation in $z$.
Multiplying this by $\mu$ gives
\begin{equation}
\label{eq:SN_amp_LCO_approx_OP}
\mathcal{A} = \ell_0 \mu + \ell_1 \mu^2 + \mu  L_{01} \left( \pm \sqrt{-\frac{a}{c} (\mu - \mu_c) + \frac{(d^2 - 4mc)(\mu - \mu_c)^2}{4c^2} } -\frac{d (\mu - \mu_c)}{2c} \right).
\end{equation}

    \subsection{The period-doubling case} 
    
With a coordinate shift
		$$
		\tilde{z} = z + \frac{a \mu + b \eta}{d\mu + e \eta -2},
		$$
the map \eqref{eq:1dPD} can be written in the new coordinate as
		$$
		\check{p}(\tilde{z};\mu, \eta) = -\tilde{z} + \check{b} (\mu, \eta) \tilde{z} + \check{c} (\mu, \eta) \tilde{z}^2 + f \tilde{z}^3 + \text{\sc {h.o.t}},
		$$
		where $\displaystyle \check{c} = c - \frac{3f(a\mu + b\eta)}{d\mu + e\eta - 2}$, $\displaystyle \check{b} =  d \mu + e \eta - \frac{2c(a \mu + b \eta)}{d \mu + e \eta -2}$.
	We can transform the map into a normal form with a smooth coordinate
		transformation
		$$
		\tilde{z} = y + \frac{\check{c}}{\lambda^2 - \lambda}y^2
		$$
		where $\displaystyle \lambda = -1 + \check{b}$.
		We can now write
		\begin{equation}
             \label{eq:PD_normal_form}
		\check{p}(y) =  \lambda y + \hat{c} y^3 + \cO(y^4),
		\end{equation}
		where $\displaystyle  ~\hat{c} = \frac{2\check{c}^2}{\lambda^2 - \lambda}+ f $.
		Further, after we apply the rescaling, 
		$$
		y = \frac{\gamma}{\sqrt{|\hat{c}|}},
		$$
		the system takes the normal form with new coordinate
		$$
		\tilde{\eta} = -(1 - \check{b}) \gamma + \chi \gamma^3 + \cO(\gamma^4),
		$$
		where $\chi= \operatorname{sign}(\hat{c})$. Near the period-doubling bifurcation, the period-two solution
        has points
		$\gamma_{1,2} = \pm \sqrt{-\check{b}} + o \big( (|\mu| + |\eta|)^{1/2} \big) $ \cite[pg,125]{Yuri98}.
		In $z$-coordinates the fixed point of the map is
		$$
		\tilde{z}=h_1(\mu,\eta) = \frac{a \mu + b \eta}{2} + o(|\mu| + |\eta|)
		$$
while
 \begin{equation}
     \tilde z_{1,2} = \pm \sqrt{- \frac{\check{b}}{\hat{c}}} + \frac{\check{c}\, \check{b}}{|\hat{c}|(\lambda^2 -\lambda)} +h_1(\mu, \eta)+ o(|\mu| + |\eta|) 
\end{equation}
are the points of the period-two solution.

Again, we treat $\eta = \eta_c$
as fixed and assume the period-two solution exists
for $0 < \mu < \mu_c$.
Then the fixed point corresponds to a limit cycle with amplitude
\begin{equation}
    \label{eq:P1_amp_LCO_approx_blowup}
\hat{\mathcal{A}} = \ell_0  +  \ell_1 \mu  + L_{01}\frac{a (\mu - \mu_c)}{2}  + o(|\mu| + |\mu -\mu_c|),
\end{equation}
while the period-two solution corresponds to a limit cycle with amplitudes
\begin{equation}
\label{eq:PD_amp_approx_scaled}
\hat{\mathcal{A}} = \ell_0 + \ell_1 \mu + L_{01}(\pm \sqrt{- \frac{(ca+d)(\mu - \mu_c)}{c^2+f}} + \frac{c(ca+d)}{ 2(c^2 + f)}(\mu - \mu_c) + \frac{a(\mu - \mu_c)}{2} ) +  o(|\mu| + |\mu -\mu_{c}|).
\end{equation}
Multiplying these by $\mu$ gives the amplitudes
in the original system coordinates.

        \section{Full equations of motion for the airfoil model}
\label{appendix: airfoil model 3dof}

The model studied in \cite{HoCh23,PeterAIAA} is a reduced-order model of two-dimensional airfoil within a constant air stream. A full derivation can be found in \cite{PeterAIAA}; here we simply specify the equations in full. The three mechanical degrees of freedom are
$\alpha$, $\beta$, and $\zeta$. The first two of these represent the angular displacement (pitch) of the airfoil and flap respectively, while $\zeta = \frac{h}{b}$
 is the dimensionless displacement in the heave degree of freedom, normalized by the semi-chord $b$. The parameter
$\bar{U} = \frac{U}{\omega_{\alpha} b}$ is a dimensionless measure of the magnitude of the free stream air velocity
approaching the airfoil, and $\delta$
characterizes the amount of flap freeplay.

Using Lagrangian mechanics one obtains the equations of motion of the mechanical degrees of freedom in the form
\begin{equation}
    \overline{\boldmath M}
    \begin{bmatrix}
        \ddot \zeta  \\
        \ddot \alpha \\
        \ddot \beta
    \end{bmatrix}
    +
    \overline{\boldmath C}
    \begin{bmatrix}
        \dot \zeta  \\
        \dot \alpha \\
        \dot \beta
    \end{bmatrix}
    +
    \overline{\boldmath K}
    \begin{bmatrix}
        \zeta  \\
        \alpha \\
        \beta
    \end{bmatrix}
    =
    \begin{bmatrix}
        L/ (mb)           \\
        T_\alpha / m{b^2} \\
        T_\beta /  (mb^2)
    \end{bmatrix}
    +
    \begin{bmatrix}
        \bf F
    \end{bmatrix}
    \label{eq:airfoil_model}
\end{equation}
$$
\mbox{where} \quad \overline {\boldmath M} =
\begin{bmatrix}
    1 & \bar x_\alpha & \bar x_\beta
    \\
    \bar x_\alpha & \bar r_\alpha ^2 & \bar r_\beta ^2 +
    \bar x_\beta (\bar c - \bar a) \\
    \bar x_\beta & \bar r_\beta ^2 + \bar x_\beta (\bar c - \bar a) & {\bar r_\beta ^2}
\end{bmatrix},
$$
$$
\overline{\boldmath K}  =
\begin{bmatrix}
    \omega _h^2  	& 0        				 	& 0               \\
    0           & {\omega _\alpha ^2\bar r_\alpha ^2} 		& 0             \\
    0           & 0                       	& {\omega _\beta ^2\bar r_\beta ^2}
\end{bmatrix}
\mbox{ and }
\overline{\boldmath C}=(\Phi^T)^{-1}
\begin{bmatrix}
    2\xi_h \omega _h 		 &	 0                       & 0                                       \\
    0                & 2\xi_\alpha \omega _\alpha \bar r_\alpha ^2 	 & 0                                       \\
    0                & 0                         & 2\xi_\beta\omega _\beta \bar r_\beta ^2
\end{bmatrix}
\Phi^{-1},
$$
where  $\Phi$ is an eigenvector matrix defined by $(\overline{\boldmath K}-\omega^2
\overline{\boldmath M})\phi_i=0$,
$\Phi=[\phi_1 \cdots \phi_n]$,
and $\Phi^T \overline{\boldmath M} \Phi=\boldmath I$. Also,  $L$, $T_\alpha$, and $T_\beta$ define state-dependent generalised
aerodynamic forces, defined below, and $\bf F$ represents other external
generalised forces (set to zero in the current model, except for preload $1\% \cdot \delta k_{\beta}$  in the
component corresponding to the rotational flag degree).
Each $\xi_i$, for
$i\in\{h,\alpha,\beta\}$,
corresponds to the mode-proportional structural damping ratios for each degree of freedom; by default we set
$\xi_i=\xi=0.02$ for each degree of freedom \cite{wright2008}.

The unsteady aerodynamics $L, T_{\alpha}, T_{\beta} $ are given as
	\begin{subequations}
		\allowdisplaybreaks
		\begin{align}
			\begin{split}
				L           &= \pi {\rho _a}{b^2}
				\left(
				{\ddot h + V\dot \alpha  - b\bar a\ddot \alpha  - \frac{V}{\pi }{T_4}\dot \beta  - \frac{b}{\pi }{T_1}\ddot \beta }
				\right)
			\\
				&+ 2\pi {\rho _a}Vb
				\left(
				Q_a(\hat \tau ){\phi _w}(0) - \int_0^{\hat \tau } {Q_a} (\sigma )\frac{\rm{d} {\phi _w}(\hat \tau  - \sigma )}{{\rm d}\sigma }{\rm d}\sigma
				\right),
			\label{eq:timedomain aerodynamics eq1}
		\end{split}
			\\ \vspace{4ex}
		\begin{split}
				T_\alpha    & = \pi {\rho _a}{b^2}
				\left[
				b\bar a\ddot{h} - Vb\left( \frac{1}{2} - \bar a \right)\dot \alpha  - b^2\left( \frac{1}{8} + {\bar a^2} \right)\ddot \alpha  - \frac{V^2}{\pi }\left( T_4 + T_{10} \right)\beta  \right.
			\\
				& + \left. \frac{Vb}{\pi}\left(- T_1 + T_8 + (\bar c - \bar a)T_4 - \frac{1}{2}T_{11} \right)
				\dot \beta  + \frac{b^2}{\pi }\left( {T_7 + (\bar c - \bar a)T_1} \right)\ddot \beta
				\right]
			\\
				& + 2\pi {\rho _a}V{b^2}\left( \bar a + \frac{1}{2} \right)\left( Q_a(\hat \tau ){\phi _w}(0) - \int_0^{\hat \tau } {Q_a}(\sigma )\frac{\rm{d}{\phi _w}(\hat \tau  - \sigma )}{{\rm d}\sigma }{\rm d}\sigma  \right),
				\label{eq:timedomain aerodynamics eq2}
			\end{split}
			\\ \vspace{4ex}
			\begin{split}
				T_\beta    &= \pi {\rho _a}{b^2}
				\left[
				\frac{b}{\pi }{T_1}\ddot h + \frac{Vb}{\pi }\left( 2T_9 + T_1 - \left( \bar a - \frac{1}{2} \right)T_4 \right)\dot \alpha
				- \frac{2b^2}{\pi}T_{13}\ddot \alpha  \right.
			\\
				& \left.  - \left( \frac{V}{\pi } \right)^2  \left(T_5 - T_4 T_{10} \right)\beta  + \frac{Vb}{2 \pi ^2}T_4T_{11}\dot \beta  + \left( \frac{b}{\pi } \right)^2 T_3 {\ddot \beta}  \right]
			\\
				& - {\rho _a}V{b^2} T_{12} \left( {Q_a}(\hat \tau ){\phi _w}(0) - \int_0^{\hat \tau } {Q_a} (\sigma ) \frac{{\rm d}{\phi _w}(\hat \tau  - \sigma )}{{\rm d}\sigma }{\rm d}\sigma  \right).
				\label{eq:timedomain aerodynamics eq3}
			\end{split}
		\end{align}
		\label{eq:timedomain aerodynamics}
	\end{subequations}
In order to approximate the unsteady aerodynamics,
we use the exponential
approximation to the Theodorsen functions
$$
\phi(\tau)=1-a_{1} {\rm e}^{-b_{1} \tau}-a_2 {\rm e}^{-b_2 \tau},
$$
as introduced by Jones \cite{Jones1938}; see \cite{wright2008} for a
derivation and definitions of the coefficients $a_{1,2}$ and $b_{1,2}$.  We then introduce the augmented variables
\begin{equation}
{ w_1}(t )
    = \int_0^{t }
    Q_a{\rm{e}^{- {b_1}(t  - \sigma )}}{\rm d}\sigma, \quad
        { w_2}(t ) = \int_0^{t }  Q_a {\rm{e}^{- {b_2}(t  - \sigma )}}{\rm d}\sigma,
    \label{eq:augmented variables}
\end{equation}
to calculate the aerodynamic forces $L$, $T_{\alpha}$, and $T_{\beta}$ in terms of
feedback from the structural motion, where
 $$
	{Q_a} =\left(V\alpha  + \dot h + b\left( \frac{1}2 - \overline{a} \right)\dot \alpha  + \frac{V}{\pi }{T_{10}}\beta
+ \frac{b}{{2\pi }}{T_{11}}\dot \beta     \right ).
$$
Then with $X_s=[\zeta,\alpha,\beta]^{\top}$, for the structural variables, and $w_p=[w_1 , w_2]^{\top}$ for the
augmented parametric variables,
the full coupled system is
\begin{equation}
    \begin{aligned}
        \dot X_s &= \dot X_s
        \\
        M{\ddot X_s}&=- K X_s- C\dot{X}_s -D_{w} w_p
        \\
        {\dot w_p}    &=E_q X_s+E_{qd} \dot{X}_s+E_{w} w_p
        \\
    \end{aligned}
    \label{state 1}
\end{equation}
where
$$
M = \overline{M} - \eta M_{nc}, \;    K=\overline{K} - \eta (U/b)^2 (K_{nc} + 0.5R_c S_{c1}), \;  C = \overline{C} -
\eta (U/b)({B}_{nc} + 0.5R_c S_{c2}),
$$
$$
D_{\omega} = \eta (U/b)R_c \begin{bmatrix}
                               a_1 b_1 (U/b)^2  & a_2 b_2 (U/b)
\end{bmatrix}, \;
E_q  = (U/b)
\begin{bmatrix}
    S_{c1}; {S}_{c1}
\end{bmatrix},
\;
E_{qd} =
\begin{bmatrix}
{S}
    _{c2};{S}_{c2}
\end{bmatrix},
$$
$$
E_{\omega}=\begin{bmatrix}
               -b_1 & 0   \\
               0   & -b_2
\end{bmatrix},\quad
\eta  = 1/\pi \mu, \quad \mbox{and} \quad
\mu  = m/\pi {\rho _a}{b^2},
$$
\begin{equation*}
    \begin{aligned}
        \boldmath{M}_{nc} & = \begin{bmatrix}
                                  - \pi      & \pi \bar a                         & T_1       \\
                                  \pi \bar a & - \pi \left(1/8 + \bar a^2 \right) & - 2T_{13} \\
                                  T_1        & - 2T_{13}                          & T_3/\pi
        \end{bmatrix},
        & \boldmath{B}_{nc}& =\begin{bmatrix}
                                  0 & - \pi              & T_4          \\
                                  0 & \pi (\bar a - 0.5) & - T_{16}     \\
                                  0 & - T_{17}           & - T_{19}/\pi
        \end{bmatrix}
    \end{aligned}
\end{equation*}
\begin{equation*}
    \begin{aligned}
        \boldmath{R}_c & =
        \begin{bmatrix}
            - 2\pi              \\
            2\pi (\bar a + 0.5) \\
            - T_{12}
        \end{bmatrix}, \quad
        \boldmath{K}_{nc} & = \begin{bmatrix}
                                  0 & 0 & 0                  \\
                                  0 & 0 & { - {T_{15}}}      \\
                                  0 & 0 & { - {T_{18}}/\pi }
        \end{bmatrix}, \\
        \boldmath{S}_{c1} & = \begin{bmatrix}
                                  0  & 1  & \dfrac{T_{10}}{\pi }
        \end{bmatrix},
        \quad \boldmath{S}_{c2} & = \begin{bmatrix}
                                        1& {0.5 - \bar a} &
                                        \dfrac{T_{11}}{2\pi }
        \end{bmatrix},
    \end{aligned}
\end{equation*}
with all $T_i$ constants given in \cite{THEODORSEN1935}.

 Finally we transform the differential-integral equations \cref{eq:airfoil_model} into the following system of
first-order ODEs:
\begin{equation}
    \begin{bmatrix}
        \dot { X }_s \\
        \ddot { X}_s \\
        \dot { w}_{\rm p}
    \end{bmatrix}
    =
    \begin{bmatrix}
        \boldmath{0}_{3 \times 3} &
        \boldmath{I}_{3 \times 3} &
        \boldmath{0}_{3 \times 2} \\
        - {\boldmath M}^{-1} {\boldmath K} &
        - {\boldmath M}^{-1} {\boldmath C} &
        - {\boldmath M}^{-1} {\boldmath D}\\
        {\boldmath E}_q &
            {\boldmath E} _{qd} &
            {\boldmath E}_w
    \end{bmatrix}
    \begin{bmatrix}
    {X_s }
        \\
        {\dot {X }_s}\\
        { w}_{\rm p}
    \end{bmatrix}
    +
    \begin{bmatrix}
    {\boldmath{0}_{3 \times 1}}
        \\
        {-{ {\boldmath M} ^{-1}} {\boldmath F} ( {X_s})}\\
        {\boldmath{0}_{2 \times 1}}
    \end{bmatrix}.
    \label{eq22}
\end{equation}

The physical parameters used in this study are given in \cref{tab: parameters of airfoil model}.
\begin{table}[ht]
    \label{tab: parameters of airfoil model}
    \centering
    \caption{Parameter Definition}
    $
    \begin{array} {|c|c|c|c|c|c|}
        \hline
        \multicolumn{6}{|c|}{\mbox{Physical parameters}} \\
        \hline
        b & {\omega}_h & {\omega}_{\alpha} & {\omega}_{\beta} & \rho_{\rm a}
        &  m \\
        \hline
        0.3 ~\rm m & 50~ {\rm rad/s} & 100 ~{\rm rad/s} & 0~ {\rm rad/s} & 1.225~ \rm{ kg/ \mathrm{mm^3}}
        &  1.5 ~{\rm kg}\\
        \hline
        a_1 & a_2 & b_1 & b_2 & \xi_i, i=h,\alpha,\beta
        & r\\
        \hline
        0.165  & 0.0455 & 0.335 & 0.3 & 2\%
        & 0.72
        \\
        \hline
        \multicolumn{6}{|c|}{\mbox{Dimensionless parameters}} \\
        \hline
        \overline{a} & \overline{c} & \overline{x}_{\alpha} & \overline{x}_{\beta} & \overline{r}_{\alpha}^{2} &
        \overline{r}_{\beta}^{2} \\ [2pt]
        \hline
        -0.4 & 0.6 & 0.2 & 0.0125 & 0.25
        & 0.00625 \\
        \hline
    \end{array}
    $
\end{table}

For convenience, we also specify here the numerically
evaluated matrices needed to compute the normal form
\cref{eq:IHS_series}.
At the BEB $\bar{U} =0.64833$ and
$\delta = 0.01~{\rm rad}$, we obtained
$$ \mathbf{A} = [\mathbf{A}_1~ \mathbf{A}_2]$$
where
$$
\mathbf{A}_1  = \left(\begin{array}{ccc}
                          0 & 0 & 0 \\ 0 & 0 & 0 \\ 0 & 0 & 0 \\ -2.9340e+03 & 2.3800e+03 & -31.8848 \\ 2.5143e+03 &
                          -1.4569e+04 & -126.9591 \\ -1.5787e+03 & 3.9373e+04 & 119.8092 \\ 0 & 0 & 0 \\ 0 & 64.8330 &
                          35.6462
\end{array}\right),
$$
$$\mathbf{A}_2 = \left(\begin{array}{cccccccc}
                           1 & 0 & 0 & 0 & 0 \\  0 & 1 & 0 & 0 & 0 \\  0 & 0 & 1 & 0 & 0 \\ -4.1409 & -1.7578 & -0.2147
                           & -118.8655 & -29.0256 \\  3.3583 & -8.2454 & -1.0773 & 157.7863 & 38.5297 \\  -3.2826 & 17
                           .0083 & -1.9570 & -328.2203 & -80.1478 \\ 0 & 0 & 0 & 0 & 1 \\1 & 0.9000 & 0.1487 & -57.3753
                           & -22.3998
\end{array}\right), $$
and, regarding the reset map,
$$
\mathbf{C} = \left(\begin{array}{c}
                              0\\ 0\\ 1\\ 0\\ 0\\ 0\\ 0\\ 0
\end{array}\right),~ \mathbf{B} = (1+r)\left(\begin{array}{c}
                                                 0\\ 0\\ 0\\ 0.0030\\ -0.0774\\ 1\\ 0\\ 0
\end{array}\right).
$$



\end{document}